\documentclass[11pt]{amsart}

\setlength{\textheight}{23cm}
\setlength{\textwidth}{16cm}
\setlength{\oddsidemargin}{0cm}
\setlength{\evensidemargin}{0cm}
\setlength{\topmargin}{0cm}

\usepackage{amsmath}
\usepackage{amssymb}
\usepackage{amsthm}
\usepackage{amscd}
\usepackage{mathrsfs}

\newcommand{\ri}{\mathfrak{o}}
\newcommand{\mi}{\mathfrak{p}}

\newcommand{\Sch}[1]{\mathcal{C}_c^\infty({#1})}
\newcommand{\Z}{\mathbf{Z}}
\newcommand{\C}{\mathbf{C}}
\newcommand{\Q}{\mathbf{Q}}
\newcommand{\F}{\mathbf{F}}

\newcommand{\fL}{\mathscr{L}(s, \pi, \land^2)}
\newcommand{\fLl}{\mathscr{L}(s_2, \pi, \land^2)}
\newcommand{\LJS}{{L}_{JS}(s, \pi, \land^2)}
\newcommand{\exL}{{L}(s, \pi, \land^2)}
\newcommand{\lra}{\longrightarrow}
\newcommand{\ds}{\displaystyle}
\newcommand{\ethm}{\end{theorem}}

\newcommand{\p}{\varpi}

\def\Section#1{\section{#1}\setcounter{equation}{0}}

\theoremstyle{plain}
\newtheorem{thm}[equation]{Theorem}
\newtheorem{lem}[equation]{Lemma}
\newtheorem{prop}[equation]{Proposition}

\theoremstyle{definition}
\newtheorem{defn}[equation]{Definition}

\newtheorem{rem}[equation]{Remark}

\title{Local newforms and formal exterior square $L$-functions}
\author{Michitaka Miyauchi and Takuya Yamauchi}
\date{\today}
\keywords{local newform, exterior square $L$-function, Rankin-Selberg method}
\subjclass[2010]{Primary 22E50, 22E35}
\address{
Department of Mathematics, Faculty of Science\\
Kyoto University\\
Oiwake Kita-Shirakawa Sakyo Kyoto 606-8502 JAPAN
}
\email{miyauchi@math.kyoto-u.ac.jp}

\address{
Department of mathematics, Faculty of Education\\
Kagoshima University\\
Korimoto 1-20-6 Kagoshima 890-0065 JAPAN}
\email{yamauchi@edu.kagoshima-u.ac.jp}

\begin{document}
%%%%%%%%%%%%%%%%%%%%%%%%%%%%%%%%%%
\begin{abstract}
Let
$F$ be a non-archimedean local field of characteristic zero.
Jacquet and Shalika attached a family of zeta integrals 
to unitary irreducible generic representations $\pi$ of 
$\mathrm{GL}_n(F)$.
In this paper, we show that 
Jacquet-Shalika integral attains a certain $L$-function,
so called the formal exterior square $L$-function,
when the Whittaker function is associated to a newform for $\pi$.
By consideration on the Galois side,
formal exterior square $L$-functions
are equal to exterior square $L$-functions
for some principal series representations.
\end{abstract}

\maketitle
\pagestyle{myheadings}
\markboth{}{}

\section{Introduction}
Let $F$ be a non-archimedean local field of characteristic zero
and
$\ri$ its ring of integers with the maximal ideal $\mi$.
Let
$\pi$ be an irreducible admissible representation 
of $\mathrm{GL}_n(F)$.
Via the local Langlands correspondence,
there exists a Weil-Deligne representation $\rho$
of the Weil group $W_F$
associated to $\pi$.
The exterior square $L$-function of $\pi$
is defined by 
\[
\exL = L(s, \land^2 \rho),
\]
where $L(s, \land^2 \rho)$ is the $L$-factor of 
the representation $\land^2 \rho$ of $W_F$.
We suppose that $\pi$ is unitary and generic.
We denote by $\mathcal{W}(\pi, \psi)$
the Whittaker model of $\pi$,
and by $\Sch{F^{m}}$
the space of Schwartz functions on $F^{m}$.
To give an integral representation of $\exL$,
Jacquet and Shalika in \cite{J-S} 
introduced a family of zeta integrals
of the form $J(s, W, \Phi)$ for $n$ even,
and $J(s, W)$ for $n$ odd,
where $W\in \mathcal{W}(\pi, \psi)$ and $\Phi \in \Sch{F^{n/2}}$.
In {\it loc. cit.},
they showed that
the integral $J(s, W, \Phi)$ attains $\exL$
when $\pi$ is unramified and $W$ is spherical.
The key for
unramified computation is 
the explicit formula for the spherical Whittaker functions
given by Casselman-Shalika \cite{CS} and Shintani \cite{Shintani}.

It is natural to ask about ramified representations.
Jacquet, Piatetski-Shapiro and Shalika introduced 
the concept of newforms 
for generic representations of $\mathrm{GL}_n(F)$
in \cite{JPSS},
which is an extension of that of spherical vectors for unramified representations.
Recently,
Matringe \cite{Matringe}
and the first author \cite{M5} independently
gave an explicit formula for Whittaker functions associated to 
newforms on the diagonal torus.
We apply this formula to
compute the integral $J(s, W, \Phi)$
when $W$ is associated to a newform.

To state our results,
we introduce the notion of  {\it the formal exterior square $L$-functions}.
For 
an irreducible admissible representation $\pi$ of $\mathrm{GL}_n(F)$,
its standard $L$-function can be written as
\[
L(s, \pi) =\prod_{i = 1}^n (1-\alpha_i q^{-s})^{-1},\ \alpha_i \in \C,
\]
where $q$ denotes the cardinality of the residue field of $F$.
We define the formal exterior square $L$-function of $\pi$ 
by
\[
\fL
= \prod_{1 \leq i < j \leq n} (1-\alpha_i \alpha_j q^{-s})^{-1}.
\]
It is known that $\fL$ is equal to $\exL$
for unramified principal series representations,
and one may check that 
$\fL$ divides $\exL$ in general (Theorem~\ref{thm:galois} (ii)).
In this paper,
we shall show the following:
%%%
\begin{thm}\label{thm:intro}
Let $\pi$ be a unitary irreducible generic representation of
$\mathrm{GL}_n(F)$.
Suppose that a function $W$ in $\mathcal{W}(\pi, \psi)$
is associated to a newform for $\pi$.
Then the integral $J(s, W, \Phi_{c})$ ($J(s, W)$ if $n$ is odd)
is a constant multiple of $\fL$,
where $c$ is the conductor of $\pi$
and $\Phi_{c}$ is the characteristic function 
of $\mi^c \oplus \cdots \oplus \mi^c \oplus (1+\mi^c) \subset 
F^{n/2}$.
\end{thm}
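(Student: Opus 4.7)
The plan is to unfold the Jacquet--Shalika integral using the Iwasawa decomposition, substitute the closed-form expression for newform Whittaker functions on the diagonal torus (due to Matringe and the first author), and recognize the resulting discrete sum as a Littlewood-type symmetric-function identity whose value is exactly $\fL$.

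I would first handle the even case $n = 2m$. Applying the Iwasawa decomposition on $\mathrm{GL}_m(F)$ and using the $K_0(\mi^c)$-invariance of the newform, I would rewrite $J(s, W, \Phi_c)$ as an iterated integral over the diagonal torus of $\mathrm{GL}_m$ and the matrix variable $X$. The choice of $\Phi_c$ is tuned so that the support of $\Phi_c(e_m g)$ exactly matches the support of the newform under right translation by the compact group, trivializing the compact-group part of the Iwasawa integration. What remains is a double integral over the torus and over $X \in \mathrm{Mat}_m(F)$ against the character $\psi(-\mathrm{tr}\, X)$.

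Next I would substitute the Matringe--Miyauchi explicit formula, which expresses the newform Whittaker function on the diagonal torus as $\delta_B^{1/2}$ times a specific Schur-type symmetric polynomial in the Langlands parameters $\alpha_1, \ldots, \alpha_n$, supported only on a dominant cone of valuation tuples. The inner integral over $X$ is then a Fourier-type integral; the interplay between the Whittaker character and $\psi(-\mathrm{tr}\, X)$ should force $X$ into an integral lattice, so that the whole expression reduces to a discrete sum over integer tuples, weighted by the $q^{-s}$ factors coming from $|\det t|^s$.

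The main obstacle will be the final step: showing that the resulting sum equals $\prod_{1 \leq i < j \leq n}(1 - \alpha_i \alpha_j q^{-s})^{-1}$ up to a constant. The expected tool is the classical Littlewood identity $\prod_{i<j}(1 - x_i x_j)^{-1} = \sum_{\mu : \mu'\, \text{even}} s_\mu(x)$, applied with $x_i = \alpha_i q^{-s/2}$, where the sum is over partitions $\mu$ whose conjugate $\mu'$ has only even parts. The delicate combinatorial step is to verify that the partitions arising from the Jacquet--Shalika unfolding are precisely those with even column lengths, with matching weights. For the odd case $n = 2m+1$, the same strategy applies to $J(s, W)$; the unfolding is slightly simpler because there is no Schwartz factor, but the final identification again relies on a Littlewood-type identity of this form.
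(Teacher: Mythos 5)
Your proposal follows essentially the same route as the paper: the Iwasawa decomposition together with the support of $g\mapsto\Phi_c(e_m g)$ trivializes the compact part, the Matringe--Miyauchi explicit formula turns the torus integral into a sum of Schur polynomials over partitions whose parts occur in pairs (equivalently, even conjugate partitions), and the Littlewood identity --- which is exactly what Jacquet--Shalika's subsection 2.2 supplies --- identifies that sum with $\fL$. The one step your sketch compresses is the reduction of the $Z$-integral: the paper forces $t_Z=1$ and $Z\in V_m+M_m(\ri)$ by combining the dominance condition on the support of the newform Whittaker function restricted to the torus with Jacquet--Shalika's Propositions 4 and 5, rather than by the Fourier character alone, but this is consistent with your outline.
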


Theorem~\ref{thm:intro} has several applications.
We summarize them comparing the recent progress in this topic.
The Jacquet-Shalika integrals attached to a unitary irreducible 
generic representation $\pi$
span a fractional ideal $I_\pi$ of $\C[q^{-s}, q^{s}]$.
It is an important fact that 
$I_\pi$ contains $1$.
Due to this, we may define Jacquet-Shalika's exterior square 
$L$-function $\LJS$
to be the normalized generator of $I_\pi$.
Kewat and Raghunathan in \cite{K-R} have already
mentioned that this is implicitly proved by Belt in \cite{Belt}.
Theorem~\ref{thm:intro} gives an alternative (and brief) proof of this 
because it implies that $\fL$ is contained in $I_\pi$.
Additionally,
Theorem~\ref{thm:intro} says that 
$\fL$ divides $\LJS$.
Thus the poles of $\fL$ are also those of $\LJS$ (Theorem~\ref{thm:zeros}).
Recently,
Kewat and Raghunathan in \cite{K-R}
showed the coincidence of 
$\LJS$ and $\exL$
for all the essentially square integrable representations of $\mathrm{GL}_n(F)$,
and for all the generic representations when $n$ is even.
Although Theorem~\ref{thm:zeros}
is obvious for such representations via arguments on the Galois side
(see Theorem~\ref{thm:galois} (ii)),
it provides an evidence of the equality of 
$\LJS$ and $\exL$ for the odd case.

It is still an open problem to find 
Whittaker functions 
which attain exterior square $L$-functions through Jacquet-Shalika integral.
We give an example of 
some principal series representations $\pi$
for which $\fL$ equals to $\exL$ (Proposition~\ref{prop:H}).
Therefore
Whittaker newforms attain $\exL$
for such representations.

On the other hand,
there is
another kind of zeta integrals
related to exterior square $L$-functions,
introduced by Bump and Friedberg \cite{B-F}.
For an irreducible generic representation $\pi$
of $\mathrm{GL}_n(F)$,
Bump-Friedberg integral has the form
$Z(s_1, s_2, W, \Phi)$,
where $W \in \mathcal{W}(\pi, \psi)$
and $\Phi \in \Sch{F^{\lfloor (n+1)/2\rfloor}}$.
For Bump-Friedberg integral,
we obtain the following
\begin{thm}\label{thm:intro2}
Let $\pi$ be an irreducible generic representation of
$\mathrm{GL}_n(F)$.
Suppose that a function $W$ in $\mathcal{W}(\pi, \psi)$
is associated to a newform for $\pi$.
Then the integral $Z(s_1, s_2, W, \Phi_c)$
is a constant multiple of $L(s_1, \pi)\fLl$,
where $c$ is the conductor of $\pi$
and $\Phi_{c}$ is the characteristic function 
of $\mi^c \oplus \cdots \oplus \mi^c \oplus (1+\mi^c) \subset 
F^{\lfloor (n+1)/2\rfloor}$.
\end{thm}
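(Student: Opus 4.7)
The plan is to follow the template of the proof of Theorem~\ref{thm:intro}, now adapted to Bump--Friedberg's mixed integral. As a first step, I would unfold $Z(s_1,s_2,W,\Phi_c)$ using the Iwasawa decomposition relative to the Bump--Friedberg subgroup of $\mathrm{GL}_n(F)$. Since $W$ is right invariant under the mirahoric subgroup of level $\mi^c$ attached to the newform, and since $\Phi_c$ is tailored so that its outer coordinates are fixed by the diagonal action of the principal units of that level while its last coordinate varies in $1+\mi^c$, the integral reduces to a sum over dominant elements of the diagonal torus $A_n$, with $W$ evaluated there and weighted by quasi-characters built from $s_1$ and $s_2$.

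Next, I would substitute the Matringe--Miyauchi formula for $W$ on the diagonal torus, expressing it in terms of the parameters $\alpha_1,\ldots,\alpha_n$ arising from $L(s,\pi)=\prod_i(1-\alpha_i q^{-s})^{-1}$. The resulting expression is an iterated series whose monomials have the form $\alpha_1^{a_1}\cdots\alpha_n^{a_n}\,q^{-s_1 b_1-s_2 b_2}$ for bookkeeping indices $a_i,b_j$. The support condition enforced by $\Phi_c$ at the final coordinate singles out the index coupled to $s_1$, while the Bump--Friedberg embedding pairs off the remaining indices in exactly the pattern that appears in the Jacquet--Shalika computation for $\fLl$.

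The third step is the symmetric-function identification. One axis of summation collapses to $\sum_{k\ge 0} h_k(\alpha_1,\ldots,\alpha_n)\,q^{-ks_1} = L(s_1,\pi)$ by the classical generating identity for complete homogeneous symmetric polynomials, while the remaining axes collapse to the generating function of complete symmetric polynomials in the variables $\{\alpha_i\alpha_j\}_{i<j}$, which by definition of the formal exterior square $L$-function is $\fLl$. To avoid re-deriving the second factor from scratch, I would quote the analogous combinatorial identity already established in the proof of Theorem~\ref{thm:intro}, reducing the work to checking that the Bump--Friedberg setup produces the same double series (modulo the decoupled $s_1$-factor).

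The principal obstacle is to verify, after the Iwasawa reduction, that the Bump--Friedberg integral factors cleanly into a one-dimensional ``standard'' piece (producing $L(s_1,\pi)$) and a Jacquet--Shalika-like piece (producing $\fLl$). This relies on the precise shape of the Bump--Friedberg subgroup and the explicit form of $\Phi_c$; the verification amounts to tracking how the exponents $s_1$ and $s_2$ couple to the Iwasawa torus coordinates and checking that the choice of $\Phi_c$ eliminates the cross terms that would otherwise obstruct a clean factorization. Once this decoupling is secured, everything else is bookkeeping parallel to the proof of Theorem~\ref{thm:intro}, and the overall scalar constant is read off from the measures of the compact sets involved.
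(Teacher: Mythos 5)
Your first two steps do match the paper's proof: the Iwasawa decomposition together with Lemma~\ref{lem:decomp} reduces $Z(s_1,s_2,W,\Phi_c)$ to a torus integral of $W(J(a,a'))$ (using that $J(k,k')\in K_{n,c}$ fixes $W$ when $k\in K_{m,c}$), and the explicit formula for the Whittaker newform on the torus turns this into
\[
\sum s_{(f_1,\ldots,f_{n-1},0)}(\alpha)\, q^{-s_2\sum_{i}f_{2i}}\, q^{-s_1\sum_{i}(-1)^{i+1}f_i},
\]
summed over all $f_1\ge\cdots\ge f_{n-1}\ge 0$. The gap is in your third step. This series does not ``decouple'': both exponents depend on the same partition through the single Schur polynomial $s_{(f_1,\ldots,f_{n-1},0)}(\alpha)$, so no choice of $\Phi_c$ and no tracking of cross terms will split it termwise into one sum giving $\sum_k h_k(\alpha)q^{-ks_1}=L(s_1,\pi)$ and an independent sum giving $\fLl$. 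Nor can you quote the combinatorial identity from the proof of Theorem~\ref{thm:intro}: there the newform formula contributes only Schur polynomials with doubled weights $(f_1,f_1,\ldots,f_{m-1},f_{m-1},0,0)$, whereas here all dominant weights occur. What is actually needed is the Littlewood-type identity of Bump--Friedberg (\cite{B-F}, (3.3)), which evaluates the displayed sum as $(1-\omega q^{-ms_2})\,L(s_1,\pi)\,\fLl$ with $\omega=\alpha_1\cdots\alpha_n$.

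That extra factor $(1-\omega q^{-ms_2})$ is a second point you must address, since it is not a constant in $(s_1,s_2)$: when $c(\pi)>0$ the degree of $L(s,\pi)$ is less than $n$, hence $\omega=0$ and the factor disappears; when $c(\pi)=0$ one falls back on \cite{B-F} Theorem 3 directly. With the correct identity supplied (and the odd case treated analogously), the rest of your outline goes through.
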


This paper is organized as follows.
In section~\ref{section:pre},
we define the formal exterior square $L$-functions,
and relate them with newforms.
We show that 
Jacquet-Shalika integrals attain the formal exterior square $L$-functions when Whittaker functions are associated to newforms
for the even case
in section~\ref{sec:even},
and for the odd case in section~\ref{sec:odd}.
We consider Bump-Friedberg integral in section~\ref{sec:BF}.
The meaning of the formal exterior square $L$-functions 
on the Galois side is given in section~\ref{sec:append}.

\medskip
\noindent
{\bf Acknowledgements} \
The first author expresses his appreciation 
to Taku Ishii for giving an 
introductory talk on this subject at Tambara Institute of Mathematical Sciences The University of Tokyo,
and to Takayuki Oda for inviting him to that conference.
The authors would like to thank 
Tomonori Moriyama for helpful conversations
and Yoshi-hiro Ishikawa for his useful comments.
The first author is partially supported by JSPS Grant-in-Aid for Scientific Research No.21540017.
The second author is partially supported by JSPS Grant-in-Aid for Scientific Research No.23740027. 

\Section{Preliminaries}\label{section:pre}
In this section,
after fixing notations,
we define formal exterior square $L$-functions for
irreducible generic representations of $\mathrm{GL}(n)$,
and relate them with newforms.
%%%%%
%%%%%
\subsection{Notation}
Let 
$F$ be a non-archimedean local field of characteristic zero,
$\ri$ its ring of integers,
$\mi$ the maximal ideal in $\ri$,
and
$\p$ a generator of $\mi$.
Let $\nu$ denote the valuation on $F$ normalized so that $\nu(\p) = 1$.
We write $|\cdot|$  for the absolute value of $F$
normalized so that $|\p| = q^{-1}$,
where 
$q$ stands for the cardinality of the residue field $\ri/\mi$
of $F$.
Throughout this paper,
we fix a non-trivial additive character $\psi$ of $F$
whose conductor is $\ri$,
that is,
$\psi$ is trivial on $\ri$ and not trivial on $\mi^{-1}$.

We 
set
$G_n = \mathrm{GL}_n(F)$.
Let
$B_n$ denote the Borel subgroup of $G_n$ consisting of the upper triangular 
matrices,
$T_n$ the diagonal torus in $G_n$
and $U_n$ the unipotent radical of $B_n$.
We write $\delta_{B_n}$ for the modulus character of $B_n$.
We define a subgroup $T_{n,1}$ of $T_n$
by
\[
T_{n,1}
=\{\mathrm{diag}(a_1, a_2, \ldots, a_{n-1}, 1)\, |\,
a_1, \ldots, a_{n-1} \in F^\times\}.
\]
We use the same letter 
$\psi$ for the following character of $U_n$ induced from $\psi$:
\[
\psi(u) = \psi(\sum_{i=1}^{n-1}u_{i,i+1}),\
\mathrm{for}\ u = (u_{ij}) \in U_n.
\]
For 
an irreducible generic representation $(\pi, V)$ of $G_n$,
we denote by
$\mathcal{W}(\pi, \psi)$ its  Whittaker model
with respect to $\psi$.

%%%%%
%%%%%
%%%%%
\subsection{Formal exterior square $L$-functions}

Let $\pi$ be an irreducible generic representation of
$G_n$.
We denote by $L(s, \pi)$ the $L$-factor of $\pi$
defined in \cite{GJ}.
Since the degree of $L(s, \pi)$ is equal to or
less than $n$,
we can write $L(s, \pi)$ as
\begin{eqnarray}\label{eq:L}
L(s, \pi) =\prod_{i = 1}^n (1-\alpha_i q^{-s})^{-1},\ \alpha_i \in \C.
\end{eqnarray}
Here we allow the possibility that
$\alpha_i = 0$.

We define {\it the formal exterior square $L$-function} of $\pi$
by
\[
\fL
= \prod_{1 \leq i < j \leq n} (1-\alpha_i \alpha_j q^{-s})^{-1}.
\]
We say that $\pi$ is {\it unramified}
if $\pi$ has a non-zero $\mathrm{GL}_n(\ri)$-fixed vector.
Suppose that $\pi$ is unramified.
Then $\fL$ coincides with the exterior square $L$-function 
$L(s, \pi, \land^2)$ of 
$\pi$ defined through the local Langlands correspondence (\cite{J-S}).

%%%%%
\begin{prop}\label{prop:esi}
Suppose that $\pi$ is an
irreducible, essentially square integrable representation of $G_n$.
Then we have
\[
\fL = 1.
\]
\end{prop}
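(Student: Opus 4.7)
The plan is to show that for $\pi$ essentially square integrable, the standard $L$-factor $L(s,\pi)$ has degree at most $1$ as a polynomial in $q^{-s}$; once we know this, at most one of the parameters $\alpha_1,\ldots,\alpha_n$ in the expression $L(s,\pi)=\prod_i(1-\alpha_iq^{-s})^{-1}$ is non-zero, every product $\alpha_i\alpha_j$ with $i<j$ vanishes, and hence $\mathscr{L}(s,\pi,\land^2)=\prod_{i<j}(1-\alpha_i\alpha_jq^{-s})^{-1}=1$.

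First I would invoke the Bernstein--Zelevinsky classification: every irreducible essentially square integrable representation of $G_n$ is of the form $\mathrm{St}(\sigma,k)$ attached to a segment $\Delta=[\sigma,\sigma|\cdot|^{k-1}]$, where $\sigma$ is a supercuspidal representation of $G_m$ with $n=mk$. The standard $L$-factor of such a representation is computed (e.g.\ in Godement--Jacquet and Jacquet--Piatetski-Shapiro--Shalika) as
\[
L(s,\mathrm{St}(\sigma,k)) \;=\; L(s+(k-1)/2,\sigma)
\]
up to the normalisation convention, where $L(s,\sigma)$ is the Godement--Jacquet $L$-factor of the supercuspidal $\sigma$.

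Next I would read off the degree in the two cases. If $m\geq 2$, then $L(s,\sigma)=1$ (supercuspidal representations of $G_m$ with $m\geq 2$ have trivial standard $L$-factor), so $L(s,\pi)=1$ and every $\alpha_i$ may be taken to be $0$. If $m=1$, then $\sigma$ is a character $\chi$ of $F^{\times}$, and
\[
L(s,\chi|\cdot|^{k-1})=
\begin{cases}
(1-\chi(\varpi)q^{k-1}q^{-s})^{-1} & \text{if $\chi$ is unramified},\\
1 & \text{if $\chi$ is ramified}.
\end{cases}
\]
Thus $L(s,\pi)$ has degree $0$ or $1$ in $q^{-s}$, so at most one $\alpha_i$ is non-zero.

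Finally I would conclude: in each case every pair product $\alpha_i\alpha_j$ with $i<j$ is zero, and therefore $\mathscr{L}(s,\pi,\land^2)=1$ by the very definition of the formal exterior square $L$-function. The only genuine input is the explicit shape of $L(s,\pi)$ for essentially square integrable $\pi$; there is no real obstacle beyond citing this standard computation, and the rest is immediate from the definition.
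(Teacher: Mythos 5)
Your proof is correct and follows the same route as the paper: the paper simply cites Jacquet's \emph{Principal $L$-functions of the linear group} for the fact that $L(s,\pi)$ has degree at most one for essentially square integrable $\pi$, and then concludes exactly as you do that every product $\alpha_i\alpha_j$ with $i<j$ vanishes. The only difference is that you unfold the cited degree bound via the Bernstein--Zelevinsky classification, which the paper leaves to the reference.
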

%%%
\begin{proof}
If $\pi$ is an
irreducible, essentially square integrable representation of $G_n$,
then 
the degree of 
$L(s, \pi)$ is equal to or less than 1 (see \cite{Jacquet2}).
The assertion follows immediately from this. 
\end{proof}
Let
$X = (X_1, \ldots, X_{n})$ be an $n$-tuple of  indeterminates.
If $f \in \Z^{n}$ 
satisfies $f_1 \geq \ldots \geq f_n \geq 0$, then
we denote by $s_f(X)$ the Schur polynomial 
in $X_1, \ldots, X_{n}$
associated to $f$,
that is,
\begin{eqnarray*}
s_f(X) = \frac{|(X_j^{f_i+n-i})_{1 \leq i, j\leq n}|}{\prod_{1 \leq i < j \leq n}(X_i-X_j)}
\end{eqnarray*}
(see \cite{Mac} Chapter I, section 3).
Since $s_f(X)$  is a symmetric polynomial,
the number $s_f(\alpha) = s_f(\alpha_1, \ldots, \alpha_n)$
is independent of the ordering of $\alpha_1, \ldots, \alpha_n$
in (\ref{eq:L}).

Until the end of this subsection,
we assume that the degree $k$ of $L(s, \pi)$ is less than $n$.
Then
we can take $\alpha_1, \ldots, \alpha_n$
so that 
$\alpha_1 \alpha_2 \cdots \alpha_k \neq 0$
and $\alpha_{k+1} = \alpha_{k+2} = \cdots = \alpha_n = 0$.
Thus we have
\begin{eqnarray*}
\fL
= \prod_{1 \leq i < j \leq k} (1-\alpha_i \alpha_j q^{-s})^{-1}.
\end{eqnarray*}
%%%
\begin{lem}\label{lem:Schur}
With the above notations,
if $s_f(\alpha) \neq 0$,
then we have $f_{k+1} = f_{k+2} = \cdots = f_n = 0$
and
$s_f(\alpha)
= s_{(f_1, \ldots, f_k)}(\alpha_1, \ldots, \alpha_k)$.
\end{lem}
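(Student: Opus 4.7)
The plan is to use the combinatorial description of Schur polynomials as generating functions of semistandard Young tableaux (SSYT), which is given in Macdonald Chapter I, (5.12) (and fits naturally with the reference to Macdonald already in the paper). For a partition $f = (f_1, \ldots, f_n)$ with $f_1 \geq \cdots \geq f_n \geq 0$,
\[
s_f(X_1, \ldots, X_n) = \sum_{T} X_1^{m_1(T)} \cdots X_n^{m_n(T)},
\]
where $T$ ranges over SSYT of shape $f$ with entries in $\{1, 2, \ldots, n\}$ and $m_j(T)$ is the number of $j$'s appearing in $T$.

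The first step is to substitute $X_j = \alpha_j$ and use that $\alpha_{k+1} = \cdots = \alpha_n = 0$. Any tableau $T$ that contains some entry $j > k$ contributes $0$ to the sum, so
\[
s_f(\alpha) = \sum_{T} \alpha_1^{m_1(T)} \cdots \alpha_k^{m_k(T)},
\]
where now $T$ runs over SSYT of shape $f$ with entries in $\{1, \ldots, k\}$. The second step is to observe that, because entries strictly increase down each column of an SSYT, the first column contains $\ell(f)$ distinct entries from $\{1, \ldots, k\}$, so SSYT of shape $f$ with entries bounded by $k$ exist if and only if $\ell(f) \leq k$, i.e. $f_{k+1} = f_{k+2} = \cdots = f_n = 0$. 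Hence $s_f(\alpha) \neq 0$ forces the vanishing of $f_{k+1}, \ldots, f_n$.

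The third step is to identify the remaining sum. When $f_{k+1} = \cdots = f_n = 0$, SSYT of shape $f$ with entries in $\{1, \ldots, k\}$ are literally the same combinatorial objects as SSYT of shape $(f_1, \ldots, f_k)$ with entries in $\{1, \ldots, k\}$, and the monomial weights match. Applying the same combinatorial formula in the other direction gives
\[
s_f(\alpha) = \sum_T \alpha_1^{m_1(T)} \cdots \alpha_k^{m_k(T)} = s_{(f_1, \ldots, f_k)}(\alpha_1, \ldots, \alpha_k).
\]

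There is no real obstacle; the only point worth flagging is that one cannot substitute zero variables directly into the bialternant quotient in the definition given in the paper, because the denominator $\prod_{i<j}(X_i - X_j)$ acquires factors $X_i - X_j = 0$ for $k < i < j \leq n$. Switching to the SSYT formula (or, equivalently, taking an iterated limit in the bialternant expression after cancelling the common factors of $X_j$ in appropriate rows) circumvents this and makes both conclusions transparent.
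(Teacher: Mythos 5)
Your proof is correct, but it takes a different route from the paper's. The paper's own proof is a two-line induction from standard polynomial identities: it quotes $s_f(X) = (X_1\cdots X_n)^{f_n}\, s_{(f_1-f_n,\ldots,f_{n-1}-f_n,0)}(X)$ and the stability property $s_{(f_1,\ldots,f_{n-1},0)}(X_1,\ldots,X_{n-1},0) = s_{(f_1,\ldots,f_{n-1})}(X_1,\ldots,X_{n-1})$; since $\alpha_n=0$, the first identity forces $f_n=0$ whenever $s_f(\alpha)\neq 0$, the second then strips off the last variable, and iterating $n-k$ times yields both conclusions. You instead work directly with the semistandard-tableau expansion, which kills all tableaux containing an entry greater than $k$ in one stroke and then reads off both the length condition $\ell(f)\le k$ (from the strict increase down the first column) and the identification with $s_{(f_1,\ldots,f_k)}(\alpha_1,\ldots,\alpha_k)$. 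The two arguments carry essentially the same content --- the identities the paper invokes are themselves most transparently proved via the tableau or bialternant descriptions --- but yours is more self-contained, proving everything from a single formula rather than citing two auxiliary facts. Your closing remark is also well taken: the bialternant quotient in the paper's displayed definition cannot be evaluated naively when several $\alpha_i$ vanish (or coincide), and one must evaluate the polynomial $s_f$ itself; the paper leaves this implicit, since its identities are polynomial identities and evaluation afterwards is harmless, but it is worth flagging as you do.
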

%%%%%
\begin{proof}
The assertion follows from 
the equations
\[
s_f(\alpha)
= (\alpha_1 \cdots \alpha_n)^{f_n} s_{(f_1-f_n, \ldots, f_{n-1}-f_n, 0)}(\alpha)
\]
and
\[
s_{(f_1, \ldots, f_{n-1}, 0)}(\alpha_1, \ldots, \alpha_{n-1}, 0)
=s_{(f_1, \ldots, f_{n-1})}(\alpha_1, \ldots, \alpha_{n-1}).
\]
\end{proof}

For $f = (f_1, \ldots, f_k) \in Z^k$
which satisfies
$f_1 \geq \ldots \geq f_k \geq 0$,
we denote by
$r(f) = r(f_1,\ldots, f_k)$
the irreducible representation of $\mathrm{GL}_k(\C)$
with dominant weight $f$.
Set $A = \mathrm{diag}(\alpha_1, \ldots, \alpha_k)$.
Then 
$\mathrm{tr}(r(f)A)$
equals to $s_f(\alpha_1, \ldots, \alpha_k)$ (\cite{Mac}).
By the results in \cite{J-S}
subsection 2.2,
for $k = 2h$,
we have
\begin{eqnarray*}
\fL
& = &
\sum_{l \geq 0} q^{-ls}
\sum_{f_1+ f_2+\cdots +f_h = l}
s_{(f_1, f_1, f_2, f_2, \ldots, f_h, f_h)}(\alpha_1, \ldots, \alpha_k),
\end{eqnarray*}
and
for $k = 2h+1$, we get
\begin{eqnarray*}
\fL
& = &
\sum_{l \geq 0} q^{-ls}
\sum_{f_1+ f_2+\cdots +f_h = l}
s_{(f_1, f_1, f_2, f_2, \ldots, f_h, f_h, 0)}(\alpha_1, \ldots, \alpha_k),
\end{eqnarray*}
where it is understood that the sum is taken for 
$f_1 \geq f_2 \geq \ldots \geq f_h \geq 0$.
By Lemma~\ref{lem:Schur},
one can observe that
if $n =2m$, then
\begin{eqnarray}\label{eq:fL_even}
\fL
& = &
\sum_{l \geq 0} q^{-ls}
\sum_{f_1+ \cdots +f_{m-1} = l}
s_{(f_1, f_1, \ldots, f_{m-1}, f_{m-1}, 0, 0)}(\alpha_1, \ldots, \alpha_n),
\end{eqnarray}
and if $n = 2m+1$,
then
\begin{eqnarray}\label{eq:fL_odd}
\fL
& = &
\sum_{l \geq 0} q^{-ls}
\sum_{f_1+ f_2+\cdots +f_m = l}
s_{(f_1, f_1, f_2, f_2, \ldots, f_m, f_m, 0)}(\alpha_1, \ldots, \alpha_n).
\end{eqnarray}

%%%%%
%%%%%
%%%%%
\subsection{Newforms}
Put $K_{n,0} = \mathrm{GL}_n(\ri)$.
For each positive integer $r$,
let $K_{n,r}$ be the subgroup of $K_{n,0}$
consisting of the elements $k = (k_{ij})$ in $K_{n,0}$
which satisfy
\[
(k_{n1}, k_{n2}, \ldots, k_{nn}) \equiv (0, 0, \ldots, 0, 1) \pmod{\mi^r}.
\]
For any integer $r$,
let $\Phi_r$ denote 
the characteristic function of 
$\mi^r \oplus \cdots \oplus \mi^r \oplus (1+\mi^r) \subset 
F^n$.
The following lemma determines the support of the function 
$g \in G_n \mapsto \Phi_r(e_n g)$,
where $e_n = (0, 0, \ldots, 0,1) \in F^n$.
%%%
\begin{lem}\label{lem:decomp}
Suppose that $r$ is positive.
Then we have
\[
\Phi_r(e_ng)
=
\left\{
\begin{array}{cl}
1, & \mbox{if}\ g \in U_nT_{n,1}K_{n,r};\\
0, & \mbox{otherwise},
\end{array}
\right.
\]
for $g \in G_n$
\end{lem}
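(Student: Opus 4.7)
The plan is to prove the two inclusions separately, using the Iwasawa decomposition $G_n = U_n T_n K_{n,0}$ as the only non-elementary input. The inclusion $U_n T_{n,1} K_{n,r} \subseteq \{g : \Phi_r(e_n g) = 1\}$ is essentially formal, while the reverse inclusion reduces to pinning down the $(n,n)$ entry of the torus component.

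For the easy direction, I write a general element of $U_n T_{n,1} K_{n,r}$ as $g = u t k$ with $u \in U_n$, $t = \mathrm{diag}(a_1, \ldots, a_{n-1}, 1) \in T_{n,1}$, and $k \in K_{n,r}$. Since $u$ is upper unipotent its last row is $e_n$, and since the $(n,n)$ entry of $t$ is $1$ we obtain $e_n u t = e_n$. Consequently $e_n g$ is just the last row of $k$, which by definition of $K_{n,r}$ lies in $\mi^r \oplus \cdots \oplus \mi^r \oplus (1+\mi^r)$, so $\Phi_r(e_n g) = 1$.

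For the converse, assume $\Phi_r(e_n g) = 1$ and apply Iwasawa to write $g = u t k_0$ with $u \in U_n$, $t = \mathrm{diag}(t_1, \ldots, t_n) \in T_n$, and $k_0 \in K_{n,0}$. Let $(b_1, \ldots, b_n) \in \ri^n$ denote the last row of $k_0$; the hypothesis becomes $t_n b_i \in \mi^r$ for $i < n$ and $t_n b_n \in 1 + \mi^r$. The key observation is that $t_n \in \ri^\times$: from $t_n b_n \in \ri^\times$ and $b_n \in \ri$ we get $\nu(t_n) \leq 0$, and if $\nu(t_n) < 0$ then $\nu(b_n) > 0$ and $\nu(b_i) \geq r - \nu(t_n) > 0$ for $i < n$, contradicting the fact that the last row of $k_0 \in \mathrm{GL}_n(\ri)$ must have at least one unit entry. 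Hence $t_n \in \ri^\times$, $b_n \in \ri^\times$, and $b_i \in \mi^r$ for $i < n$.

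Finally, I set $t_0 = \mathrm{diag}(1, \ldots, 1, t_n) \in K_{n,0}$ and $t'' = \mathrm{diag}(t_1, \ldots, t_{n-1}, 1) \in T_{n,1}$, so that $t = t'' t_0$ and $g = u t'' (t_0 k_0)$. The product $t_0 k_0$ lies in $K_{n,0}$ (both factors do) and its last row is $(t_n b_1, \ldots, t_n b_n) \in \mi^r \oplus \cdots \oplus \mi^r \oplus (1+\mi^r)$, so $t_0 k_0 \in K_{n,r}$ and $g \in U_n T_{n,1} K_{n,r}$. The only genuinely non-routine point is the valuation argument ruling out $\nu(t_n) < 0$; once that is in place, the rest is straightforward matrix bookkeeping.
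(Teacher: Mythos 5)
Your proof is correct and follows essentially the same route as the paper: Iwasawa decomposition, the valuation argument forcing $|t_n|=1$ (you phrase it as a contradiction via the last row of $k_0$ landing entirely in $\mi$, the paper via $|k_{nn}|=1$, but it is the same observation), and then absorbing the unit $t_n$ into the compact factor. No gaps.
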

%%%%%
\begin{proof}
Clearly,
we have $\Phi_r(e_ng) = 1$
for $g \in U_nT_{n,1}K_{n,r}$.
We shall prove the converse statement.
By the Iwasawa decomposition
$G_n = U_n T_n K_{n, 0}$,
we can write $g$ in $G_n$
as 
$g = utk$,
where
$u \in U_n$, $t \in T_n$, $k \in K_{n,0}$.
Since the function $g \mapsto \Phi_r(e_n g)$ is
left $U_n$-invariant,
we may assume $g = tk$.
We write $t = \mathrm{diag}(t_1, t_2, \ldots, t_n)$,
$t_i \in F^\times$.
Suppose that $\Phi_r(e_n tk) = 1$.
Then we obtain
$|t_n k_{ni}| \leq q^{-r}$, for $1 \leq i \leq n-1$
and $|t_n k_{nn}| = 1$.
This implies $|k_{ni}| < |k_{nn}|$ for $1 \leq i \leq n-1$.
Since $k$ lies in $K_{n,0}$,
we have $|k_{nj}| \leq 1$ for all $1 \leq j \leq n$,
and there exists at least one $j$ such that $|k_{nj}| = 1$.
Thus we get $|k_{nn}| = 1$,
and hence $|t_n| = 1$.
So we may assume that $t$ lies in $T_{n,1}$.
In this case,
the equation $\Phi_r(e_ntk) = \Phi_r(e_nk) = 1$
precisely
means that $k$ belongs to $K_{n, r}$.
This completes the proof.
\end{proof}

Let $\pi$ be an irreducible generic representation of $G_n$.
We write $V(r)$ for the space of $K_{n,r}$-fixed vectors in $V$.
Due to \cite{JPSS} (5.1) Th\'{e}or\`{e}me (ii),
there exists a non-negative integer 
$r$ such that
$V(r)\neq \{0\}$.
We denote by $c(\pi)$ the smallest integer with this property.
We 
call $c(\pi)$
{\it the conductor of $\pi$},
and $V(c(\pi))$ {\it the space of newforms for $\pi$}.
By \cite{JPSS} (5.1) Th\'{e}or\`{e}me (ii) again,
we have
\begin{eqnarray}\label{eq:mult_one}
\dim V(c(\pi)) = 1.
\end{eqnarray}

For simplicity,
we say that an element $W$ in $\mathcal{W}(\pi, \psi)$
is a newform 
if $W$ is the Whittaker function associated to a newform for $\pi$.
It follows from 
\cite{M5} Theorem 4.1
and \cite{Shintani}
that
a newform $W$ in $\mathcal{W}(\pi, \psi)$
is determined by its value at $1 \in G_n$.
%%%
\begin{prop}\label{prop:nonzero}
Let 
$W$ be a newform in $\mathcal{W}(\pi, \psi)$.
If an element $t = \mathrm{diag}(t_1, \ldots, t_n) \in T_n$
satisfies
$W(t) \neq 0$,
then we have
$|t_i| \leq |t_{i+1}|$ for all $1 \leq i \leq n-1$.
\end{prop}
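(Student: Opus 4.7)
The plan is to invoke the explicit formula for the newform Whittaker function on the diagonal torus proved in \cite{M5} Theorem 4.1 (and, in the unramified case, in \cite{Shintani}), and to read off the support condition directly from the combinatorial shape of that formula, namely the vanishing of Schur polynomials outside the dominant chamber.

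First, since the conclusion depends only on the absolute values $|t_i|$, I would use the right $K_{n,c(\pi)}$-invariance of $W$ (together with the action of diagonal units, noting that the claim is vacuously preserved under multiplying each $t_i$ by a unit) to reduce to the case $t = \mathrm{diag}(\p^{m_1}, \ldots, \p^{m_n})$ with $m_i \in \Z$. With our normalization $|\p^m| = q^{-m}$, the inequalities $|t_i| \leq |t_{i+1}|$ translate to $m_i \geq m_{i+1}$, so the task becomes to show $W(t) = 0$ unless $m_1 \geq m_2 \geq \cdots \geq m_n$.

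Second, I would recall the formula from \cite{M5} Theorem 4.1: up to the non-vanishing factor $\delta_{B_n}^{1/2}(t)$ and the scalar $W(1)$, the value $W(t)$ equals a Schur polynomial $s_f(\alpha_1, \ldots, \alpha_n)$ in the parameters $\alpha_i$ appearing in $L(s,\pi)$, where the tuple $f$ is built from $(m_1, \ldots, m_n)$ by recording the successive differences $m_i - m_{i+1}$ (with an appropriate convention at the last index). By the very definition of Schur polynomials, $s_f(X)$ vanishes unless $f$ is a partition, i.e.\ $f_1 \geq f_2 \geq \cdots \geq f_n \geq 0$, and the dictionary from $m$ to $f$ in \cite{M5} is arranged so that this dominance condition is equivalent to the chain $m_1 \geq m_2 \geq \cdots \geq m_n$. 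Consequently, any violation of the latter forces the Schur polynomial, and hence $W(t)$, to vanish; taking the contrapositive yields the proposition.

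The main obstacle is pure bookkeeping: aligning the indexing and ordering conventions of \cite{M5} with those of the present paper, in particular the permutation of the $\alpha_i$'s and the convention that places the largest valuation at the bottom of the diagonal (which is what produces the inequality $|t_i| \leq |t_{i+1}|$ rather than its opposite). Once the conventions are matched, no further computation is required, and the proof reduces to the standard fact that Schur polynomials vanish at non-dominant indices.
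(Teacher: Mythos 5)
Your outline matches the paper's proof in spirit: both arguments first use right invariance (plus the central character) to reduce to $t=\mathrm{diag}(\p^{f_1},\dots,\p^{f_{n-1}},1)$ and then quote the first author's results on the newform Whittaker function on the torus. The only structural difference is that the paper invokes \cite{M5} Proposition~1.2, which is exactly the support statement (proved by the elementary conjugation argument $W(tut^{-1}\cdot t)=\psi(tut^{-1})W(t)$ for $u\in U_n\cap K_{n,0}\subset K_{n,c(\pi)}$), whereas you invoke the heavier explicit formula of \cite{M5} Theorem~4.1; since that theorem's statement already contains the ``$0$ otherwise'' clause, your route is also valid.

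Two pieces of your ``bookkeeping'' are, however, stated incorrectly and would derail a literal execution. First, the dictionary in \cite{M5} Theorem~4.1, as used in Proposition~\ref{prop:formal} of this paper, is $f_i=\nu(t_i)$ directly (after normalizing $t_n=1$ via the central character --- note that multiplying by diagonal units alone does not put the last entry in $1+\mi^{c(\pi)}$, so you do need the central character here, not just $K_{n,c(\pi)}$-invariance). It is \emph{not} the tuple of successive differences $m_i-m_{i+1}$: with that convention, ``$f$ is a partition'' would mean the differences are themselves decreasing and non-negative, a strictly stronger condition than $m_1\ge\cdots\ge m_n$, and the asserted equivalence would fail. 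Second, the vanishing outside the dominant chamber is part of the case distinction in the cited theorem, not a property of Schur polynomials: the determinantal expression $s_f(X)$ evaluated at a non-dominant integer tuple is $\pm s_{f'}(X)$ for a sorted $f'$ (or $0$ only when two shifted indices coincide), so it does not ``vanish by definition.'' Neither slip is fatal --- the cited theorem supplies the vanishing you need --- but as written the justification for the key step is not the correct one.
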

%%%%%
\begin{proof}
By using the central character of $\pi$,
we may assume that $t_n = 1$.
Set $f_i = \nu(t_i)$, for $1 \leq i \leq n-1$.
Since $W$ is $K_{n, c(\pi)}$-invariant,
we have
$W(t) = W(\mathrm{diag}(\p^{f_1}, \ldots, \p^{f_{n-1}},1))$.
Hence the proposition follows from \cite{M5} Proposition 1.2.
\end{proof}

We shall give an integral representation of formal exterior square $L$-functions.
We normalize Haar measures on $T_n$ and $T_{n,1}$
so that
the volumes of $T_n\cap K_{n,0}$ and of $T_{n,1}\cap K_{n,0}$
are one respectively.
Note that if the conductor $c(\pi)$ of $\pi$ is positive,
then the degree of $L(s, \pi)$ is less than $n$ (\cite{Jacquet2}).
%%%%%
\begin{prop}\label{prop:formal}
Let $\pi$ be an irreducible generic representation of $G_n$
whose conductor is positive
and let $W$ be the newform in $\mathcal{W}(\pi, \psi)$
such that $W(1) = 1$.

(i)
Suppose that $n =2m$.
For any element $a = \mathrm{diag}(a_1, \ldots, a_m) \in T_m$,
we set 
\[
b = \mathrm{diag}(a_1, a_1, a_2, a_2, \ldots, 
a_m, a_m) \in T_n.
\]
Then we have
\begin{eqnarray*}
\fL
& =& 
\int_{T_{m, 1}}
W(b )
\delta_{B_n}(b)^{-1/2} |\det a|^s  da.
\end{eqnarray*}

(ii)
Suppose that $n =2m+1$.
For each element $a = \mathrm{diag}(a_1, \ldots, a_m) \in T_m$,
we set $b = \mathrm{diag}(a_1, a_1, a_2, a_2, \ldots, 
a_m, a_m, 1) \in T_n$.
Then we have
\begin{eqnarray*}
\fL
& =& 
\int_{T_{m}}
W(b )
\delta_{B_n}(b)^{-1/2} |\det a|^s  da.
\end{eqnarray*}
\end{prop}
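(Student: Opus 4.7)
The strategy is to convert the integral into a sum over integer tuples, substitute the known explicit formula for the Whittaker newform at torus elements, and match the result against the Schur-polynomial expansions (\ref{eq:fL_even}) and (\ref{eq:fL_odd}). I shall describe the even case $n = 2m$ in detail; the odd case $n = 2m+1$ is entirely parallel.

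First I would use the right $T_n \cap K_{n,c(\pi)}$-invariance of $W$ (which restricts to $T_m \cap K_{m,0}$-invariance of the integrand in the variable $a$) together with the normalization of Haar measure on $T_{m,1}$ to rewrite the integral as a sum over tuples $(f_1, \ldots, f_{m-1}) \in \Z^{m-1}$, where $a_i = \p^{-f_i}$ and $a_m = 1$. Proposition~\ref{prop:nonzero}, applied to the diagonal element $b$ whose entries are $(\p^{-f_1}, \p^{-f_1}, \ldots, \p^{-f_{m-1}}, \p^{-f_{m-1}}, 1, 1)$, forces $f_1 \geq f_2 \geq \cdots \geq f_{m-1} \geq 0$ on the support. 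Thus only dominant tuples contribute, which matches the range of summation in (\ref{eq:fL_even}).

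The key step is substituting the explicit formula of \cite{M5} Theorem 4.1 (together with Lemma~\ref{lem:Schur} to handle the vanishing $\alpha_i$'s when $\deg L(s,\pi) < n$): for a newform $W$ with $W(1) = 1$ and a dominant exponent vector $\lambda = (\lambda_1, \ldots, \lambda_{n-1}, 0)$ with $\lambda_1 \geq \cdots \geq \lambda_{n-1} \geq 0$, one has
\[
W\bigl(\mathrm{diag}(\p^{\lambda_1}, \ldots, \p^{\lambda_{n-1}}, 1)\bigr)
= \delta_{B_n}^{1/2}(\cdot)\, s_\lambda(\alpha_1, \ldots, \alpha_n).
\]
Applied to our $b$, where $\lambda = (f_1, f_1, f_2, f_2, \ldots, f_{m-1}, f_{m-1}, 0, 0)$, this produces precisely the Schur polynomial appearing in the inner sum of (\ref{eq:fL_even}). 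The $\delta_{B_n}^{1/2}(b)$ factor from the Whittaker formula cancels the $\delta_{B_n}^{-1/2}(b)$ factor in the integrand, and $|\det a|^s = q^{-(f_1 + \cdots + f_{m-1})s}$ supplies the weighting by $q^{-ls}$ with $l = f_1 + \cdots + f_{m-1}$. Regrouping by $l$ and comparing with (\ref{eq:fL_even}) yields the claim in (i).

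For the odd case, the integration is over $T_m$ rather than $T_{m,1}$, so $a_m$ is a free variable and we set $f_m = -\nu(a_m)$. The element $b = \mathrm{diag}(\p^{-f_1}, \p^{-f_1}, \ldots, \p^{-f_m}, \p^{-f_m}, 1) \in T_{2m+1}$ has $n-1 = 2m$ exponents above the final $1$, so the explicit formula produces $s_{(f_1, f_1, \ldots, f_m, f_m, 0)}$, and the identical cancellation and regrouping argument yields (\ref{eq:fL_odd}). The main technical step is verifying that the diagonal structure of $b$ (entries repeated in pairs) is compatible with the dominance condition in the Miyauchi--Shintani formula and that the normalization of Haar measures matches; the remaining work is routine bookkeeping of $\delta_{B_n}$ and $|\det a|^s$.
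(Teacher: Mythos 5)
Your proof follows the paper's own argument essentially verbatim: use the $K_{n,c(\pi)}$-invariance of $W$ to reduce the integral to a sum over integer exponent tuples, invoke the explicit formula of \cite{M5} Theorem 4.1 (whose vanishing statement --- equivalently Proposition~\ref{prop:nonzero} --- restricts the sum to dominant tuples), cancel the $\delta_{B_n}^{\pm 1/2}$ factors, and match the result against (\ref{eq:fL_even}) and (\ref{eq:fL_odd}). The one slip is your sign convention: with $a_i=\p^{-f_i}$, Proposition~\ref{prop:nonzero} would force $f_1\le\cdots\le f_{m-1}\le 0$ and give $|\det a|^s=q^{+s\sum_i f_i}$, so you need $a_i=\p^{f_i}$ (i.e.\ $f_i=\nu(a_i)$, as in the paper) for the dominance condition $f_1\ge\cdots\ge f_{m-1}\ge 0$ and the weight $q^{-ls}$ that you state to come out correctly.
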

%%%
\begin{proof}
(i)
For $a = \mathrm{diag}(a_1, \ldots, a_{m-1}, 1) \in T_{m,1}$,
we put $f_i = \nu(a_i)$, for  $1 \leq i \leq m-1$.
Since $W$ is fixed by $K_{n, c(\pi)}$,
we have
\[
W(b) 
= W(\mathrm{diag}(\p^{f_1}, \p^{f_1}, \p^{f_2}, \p^{f_2},\ldots, \p^{f_{m-1}}, \p^{f_{m-1}}, 1, 1)).
\]
It follows from \cite{M5} Theorem 4.1
that
\[
W(b) = 
\left\{
\begin{array}{cl}
\delta_{B_n}^{1/2}(b)
s_{(f_1, f_1, \ldots, f_{m-1}, f_{m-1}, 0, 0)}(\alpha_1, \ldots, \alpha_n), & \mbox{if}\
f_1 \geq \ldots \geq f_{m-1} \geq 0;\\
0, & \mbox{otherwise}.
\end{array}
\right.
\]
Thus, (\ref{eq:fL_even}) implies the assertion.
Part (ii) follows from (\ref{eq:fL_odd})
in a similar fashion.
\end{proof}

\Section{Jacquet-Shalika integral: the even case}\label{sec:even}
We shall prove that Jacquet-Shalika integral 
attains the formal exterior square $L$-function
when the Whittaker function is associated to a newform.
In this section,
we consider the case when $n = 2m$.

Let $\pi$ be a unitary irreducible generic representation of
$G_n$.
In \cite{J-S},
Jacquet and Shalika introduced a family of 
zeta integrals which have the form $J(s, W, \Phi)$, $W \in \mathcal{W}(\pi, \psi)$,
$\Phi \in \Sch{F^m}$,
where $\Sch{F^m}$ denotes the space of locally constant,
compactly supported functions on $F^m$:
\begin{eqnarray*}
& J(s, W, \Phi)
= \displaystyle
\int_{U_m\backslash G_m} \int_{V_m\backslash M_m} W\left(
\sigma \left(
\begin{array}{cc}
1_m & Z\\
0 & 1_m
\end{array}
\right)
\left(
\begin{array}{cc}
g & 0\\
0 & g
\end{array}
\right)
\right)
\psi(-\mathrm{tr}{Z}) dZ
\Phi(e_m g) |\det g|^s  dg,
\end{eqnarray*}
where 
$M_m = M_m(F)$,
$V_m$ is the space of upper triangular matrices in $M_m$,
$e_m = (0, 0, \ldots, 0, 1) \in F^m$
and
$\sigma$ is the permutation of degree $n = 2m$
given by
\begin{eqnarray*}
& 
\sigma = 
\left(
\begin{array}{cccc|cccc}
1 & 2 & \ldots & m & m+1 & m+2 & \ldots & 2m\\
1 & 3 & \ldots & 2m-1 & 2 & 4 & \ldots & 2m
\end{array}
\right).
\end{eqnarray*}
By Proposition 1 in \cite{J-S} section 7
and \cite{Belt} Proposition 4.3,
there exists $\eta > 0$
such that the integral $J(s, W, \Phi)$
absolutely converges to a rational function in $q^{-s}$
for $\mathrm{Re}(s) > 1-\eta$.

We take Haar measure on $V_m\backslash M_m$
so that the volume of $V_m\backslash (V_m +M_m(\ri))$ is one.
By using the Iwasawa decomposition
$G_m = U_m T_m K_{m, 0}$,
we can write an element $g$ in $G_m$
as $g = uak$, $u \in U_m$, $a \in T_m$, $k \in K_{m, 0}$.
Then Haar measure $dg$ on $U_m\backslash G_m$
is decomposed into
\[
\int_{U_m\backslash G_m}dg = \int_{T_m}\delta_{B_m}(a)^{-1} da\, \int_{K_{m,0}}dk.
\]
We normalize Haar measures on $T_m$ and $K_{m,0}$
so that 
the volumes of $T_m\cap K_{m, c(\pi)}$ and of $K_{m, c(\pi)}$
are one respectively.
Then the following holds:
%%%%%
\begin{thm}\label{thm:main_even}
Let $\pi$ be a unitary irreducible generic representation of $\mathrm{GL}_{2m}(F)$
and let
$W$ be the newform in $\mathcal{W}(\pi, \psi)$
such that $W(1) = 1$.
Then we have
\[
J(s, W, \Phi_{c(\pi)}) = \fL,
\]
where
$c(\pi)$ is the conductor of $\pi$
and
$\Phi_{c(\pi)}$ is the characteristic function of
$\mi^{c(\pi)} \oplus \cdots \oplus \mi^{c(\pi)} \oplus 
(1+\mi^{c(\pi)}) \subset F^m$.
\end{thm}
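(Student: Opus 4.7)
The plan is to reduce $J(s, W, \Phi_{c(\pi)})$ step by step to the torus integral in Proposition~\ref{prop:formal}~(i).

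First I would apply the Iwasawa decomposition $G_m = U_m T_m K_{m, 0}$ to write the outer integration as $\int_{T_m} \int_{K_{m,0}} (\cdots)\, dk\, \delta_{B_m}(a)^{-1}\, da$. By Lemma~\ref{lem:decomp} (applied in $G_m$), the factor $\Phi_{c(\pi)}(e_m g)$ restricts the support to $g \in U_m T_{m,1} K_{m, c(\pi)}$. The $k$-integration is absorbed into the overall normalization by the observation that $\mathrm{diag}(k, k) \in K_{n, c(\pi)}$ for every $k \in K_{m, c(\pi)}$, since its last row equals $(0, \ldots, 0, k_{m1}, \ldots, k_{mm}) \equiv (0, \ldots, 0, 1) \pmod{\mi^{c(\pi)}}$; together with the right $K_{n, c(\pi)}$-invariance of the Whittaker newform $W$ and the chosen normalizations $\mathrm{vol}(K_{m, c(\pi)}) = \mathrm{vol}(T_m \cap K_{m, c(\pi)}) = 1$, this reduces the outer integration to a single integration over $a \in T_{m, 1}$.

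Next I would use the identity
\[
\sigma \begin{pmatrix} 1_m & Z \\ 0 & 1_m \end{pmatrix} \begin{pmatrix} a & 0 \\ 0 & a \end{pmatrix} = b\, \sigma \begin{pmatrix} 1_m & a^{-1}Za \\ 0 & 1_m \end{pmatrix}
\]
with $b = \mathrm{diag}(a_1, a_1, \ldots, a_m, a_m)$, and perform the change of variable $Z \mapsto aZa^{-1}$ in the inner integration. The character $\psi(-\mathrm{tr}(Z))$ is unaffected because trace is conjugation invariant, and the Jacobian is $\mu(a) := \prod_{i > j} |a_i / a_j|$. A direct exponent check shows $\mu(a)\, \delta_{B_m}(a)^{-1} = \delta_{B_n}(b)^{-1/2}$, which is exactly the modulus factor appearing in Proposition~\ref{prop:formal}~(i).

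The main obstacle, and the heart of the argument, is the evaluation of the remaining inner integral
\[
I(a) := \int_{V_m \backslash M_m} W\bigl(b\, \sigma\, U(Z)\bigr)\, \psi(-\mathrm{tr}(Z))\, dZ, \qquad U(Z) := \begin{pmatrix} 1_m & Z \\ 0 & 1_m \end{pmatrix},
\]
which I would show equals $W(b)$. The ``main term'' is produced by two simple facts: $\sigma \in K_{n, c(\pi)}$ because its last row is $(0, \ldots, 0, 1)$, and $U(Z) \in K_{n, c(\pi)}$ whenever $Z$ has entries in $\ri$. Hence for $Z$ in the coset $V_m \backslash (V_m + M_m(\ri))$ the integrand equals $W(b)$, and the normalization $\mathrm{vol}(V_m \backslash (V_m + M_m(\ri))) = 1$, together with the triviality of $\psi$ on $\ri$, makes this coset contribute exactly $W(b)$. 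The essential difficulty is to prove that representatives $Z$ with some entry outside $\ri$ contribute nothing. For this I would compute an Iwasawa decomposition $\sigma U(Z) = u\, t\, k$ in $G_n$ and apply Proposition~\ref{prop:nonzero} to the torus factor $bt$: when some entry of $Z$ has absolute value greater than one, $bt$ violates the dominance condition $|(bt)_i| \leq |(bt)_{i+1}|$, forcing $W(b \sigma U(Z)) = 0$. Carrying out this support analysis uniformly in $Z$ is the principal technical step.

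Once $I(a) = W(b)$ is established, the above reductions yield
\[
J(s, W, \Phi_{c(\pi)}) = \int_{T_{m, 1}} W(b)\, \delta_{B_n}(b)^{-1/2} |\det a|^s\, da,
\]
which equals $\fL$ by Proposition~\ref{prop:formal}~(i).
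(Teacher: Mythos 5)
Your overall route is the same as the paper's: the Iwasawa decomposition plus Lemma~\ref{lem:decomp} to collapse the outer integral to $T_{m,1}$, the identity $U(Z)\,\mathrm{diag}(a,a)=\mathrm{diag}(a,a)\,U(a^{-1}Za)$ with the change of variable producing $\delta_{B_m}(a)^{-2}=\delta_{B_n}(b)^{-1/2}$, and finally Proposition~\ref{prop:formal}~(i). Those reductions are all correct. The problem is the step you yourself flag as ``the principal technical step'': you assert that if some entry of $Z$ lies outside $\ri$ then the torus part $t$ of the Iwasawa decomposition of $\sigma U(Z)$ makes $bt$ violate the dominance condition of Proposition~\ref{prop:nonzero}, but you give no argument, and Proposition~\ref{prop:nonzero} alone cannot deliver this. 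Writing $b=\mathrm{diag}(a_1,a_1,\ldots,a_m,a_m)$ and $t=\mathrm{diag}(t_1,\ldots,t_n)$, dominance at an odd index $i=2j-1$ reads $|t_{2j-1}|\le|t_{2j}|$ (the $a_j$ cancels), while at an even index it reads $|a_jt_{2j}|\le|a_{j+1}t_{2j+1}|$ and gives no constraint on $t$ independent of $a$. To conclude you need the two structural facts the paper imports from Jacquet--Shalika (\cite{J-S} section~5, Propositions~4 and~5): for $\sigma U(Z)\sigma^{-1}=u_Zt_Zk_Z$ one has $|t_i|\ge1$ for $i$ odd and $|t_i|\le1$ for $i$ even, and $|t_i|=1$ for all $i$ holds exactly when $Z\in V_m+M_m(\ri)$. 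Only by combining $|t_{2j-1}|\le|t_{2j}|$ with $|t_{2j-1}|\ge1\ge|t_{2j}|$ do you get $|t_i|=1$ for all $i$, hence (by the second of these propositions) $Z\in V_m+M_m(\ri)$ and one may take $u_Z=t_Z=1$; the link between the size of the entries of $Z$ and the failure of dominance runs through exactly these propositions, not a direct computation. You also need, as the paper notes, to choose $t_Z\in T_{n,1}$ and $k_Z$ with last row $e_n$ (possible because the last row of $\sigma U(Z)\sigma^{-1}$ is $e_n$), so that $k_Z\in K_{n,c(\pi)}$ fixes $W$ and $W(bu_Zt_Zk_Z)$ reduces to $\psi(bu_Zb^{-1})W(bt_Z)$.

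A second, smaller omission: your argument only covers $c(\pi)>0$. Lemma~\ref{lem:decomp} and Proposition~\ref{prop:formal} are both stated (and only true) for positive conductor; when $c(\pi)=0$ the support of $g\mapsto\Phi_0(e_mg)$ is strictly larger than $U_mT_{m,1}K_{m,0}$, and the paper disposes of this case separately by quoting the unramified computation of \cite{J-S} (section~7, Proposition~2).
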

%%%%%
\begin{proof}
If $c(\pi)$ is zero,
then the theorem follows from Proposition 2 in \cite{J-S}
section 7.
We suppose that $c = c(\pi)$ 
is positive.
The proof is quite similar to that for unramified representations.
By Lemma~\ref{lem:decomp},
the map
$g \mapsto \Phi_c(e_mg)$
is the characteristic function on $U_mT_{m, 1}K_{m, c}$.
We note that if $k$ belongs to  $K_{n, c}$,
then
$\left(
\begin{array}{cc}
k & 0\\
0 & k
\end{array}
\right)$
lies in $K_{n, c}$
and fixes $W$.
Thus we obtain
\begin{eqnarray*}
J(s, W, \Phi_c)
& =& 
\int_{T_{m, 1}}\int_{V_m\backslash M_m} W\left(
\sigma \left(
\begin{array}{cc}
1_m & Z\\
0 & 1_m
\end{array}
\right)
\left(
\begin{array}{cc}
a & 0\\
0 & a
\end{array}
\right)
\right)
\psi(-\mathrm{tr}{Z}) dZ
\delta_{B_m}(a)^{-1} |\det a|^s  da.
\end{eqnarray*}
Note that
$\left(
\begin{array}{cc}
1_m & Z\\
0 & 1_m
\end{array}
\right)
\left(
\begin{array}{cc}
a & 0\\
0 & a
\end{array}
\right)
=
\left(
\begin{array}{cc}
a & 0\\
0 & a
\end{array}
\right)
\left(
\begin{array}{cc}
1_m & a^{-1}Za\\
0 & 1_m
\end{array}
\right)
$
and
$\mathrm{tr}{Z} = \mathrm{tr}({a^{-1}Za})$.
So we get 
\begin{eqnarray*}
J(s, W, \Phi_c)
& =& 
\int_{T_{m, 1}}\int_{V_m\backslash M_m} W\left(
\sigma 
\left(
\begin{array}{cc}
a & 0\\
0 & a
\end{array}
\right)
\left(
\begin{array}{cc}
1_m & Z\\
0 & 1_m
\end{array}
\right)
\right)
\psi(-\mathrm{tr}{Z}) dZ
\delta_{B_m}(a)^{-2} |\det a|^s  da\\
& =& 
\int_{T_{m, 1}}\int_{V_m\backslash M_m} W\left(
\sigma 
\left(
\begin{array}{cc}
a & 0\\
0 & a
\end{array}
\right)
\left(
\begin{array}{cc}
1_m & Z\\
0 & 1_m
\end{array}
\right)\sigma^{-1}
\right)
\psi(-\mathrm{tr}{Z}) dZ
\delta_{B_m}(a)^{-2} |\det a|^s  da.
\end{eqnarray*}
The second equality follows because
$\sigma$ belongs to $K_{n, c}$ and fixes $W$.
Set $b
=
\sigma 
\left(
\begin{array}{cc}
a & 0\\
0 & a
\end{array}
\right)\sigma^{-1}
$.
Then we have
$b = \mathrm{diag}(a_1, a_1, a_2, a_2, \ldots, a_{m-1}, a_{m-1}, 1, 1)$,
where $a = \mathrm{diag}(a_1, a_2, \ldots, a_{m-1}, 1) \in 
T_{m, 1}$.
By the Iwasawa decomposition $G_n
= U_n T_n K_{n,0}$,
we can write 
\[
\sigma
\left(
\begin{array}{cc}
1_m & Z\\
0 & 1_m
\end{array}
\right)\sigma^{-1}
= u_Z t_Z k_Z,
\]
for $Z \in M_m$,
where $u_Z \in U_n$, $t_Z \in T_n$ and $k_Z \in K_{n, 0}$.
Since the $n$-th row of $u_Z t_Z k_Z$
is $(0, 0, \ldots, 0, 1)$,
we can take $t_Z$ and $k_Z$
so that $t_Z \in T_{n, 1}$
and $k_Z$
has the $n$-th row 
$(0, 0, \ldots, 0, 1)$.
This implies that $k_Z$ lies in $K_{n, c}$.
Hence we obtain
\begin{eqnarray*}
J(s, W, \Phi_c)
& =& 
\int_{T_{m, 1}}\int_{V_m\backslash M_m} W\left(
b u_Z t_Z
\right)
\psi(-\mathrm{tr}{Z}) dZ
\delta_{B_m}(a)^{-2} |\det a|^s  da\\
& =& 
\int_{T_{m, 1}}\int_{V_m\backslash M_m}
\psi(b u_Z b^{-1})
 W\left(
b t_Z
\right)
\psi(-\mathrm{tr}{Z}) dZ
\delta_{B_m}(a)^{-2} |\det a|^s  da.
\end{eqnarray*}
We write 
$b = \mathrm{diag}(b_1, \ldots, b_n)$
and $t_Z = \mathrm{diag}(t_1, \ldots, t_n)$.
It follows from Proposition~\ref{prop:nonzero}
that if $W(b t_Z) \neq 0$,
then we have
$|b_i t_i| \leq |b_{i+1}t_{i+1}|$,
for $1 \leq i \leq n-1$.
So we obtain
$|t_i| \leq |t_{i+1}|$, for $i$ odd.
By Proposition 4 in \cite{J-S} section 5,
we have
$|t_i| \geq 1$ for $i$ odd,
and $|t_i| \leq 1$ otherwise.
Thus we get $|t_i| = 1$ for all $i$.
Proposition 5 in \cite{J-S} section 5
says that $Z$ lies in $V_m +M_m(\ri)$.
So we may take $u_Z = t_Z = 1$
 if $W(b t_Z) \neq 0$,
and obtain
\begin{eqnarray*}
J(s, W, \Phi_c)
& =& 
\int_{T_{m, 1}}
W(b )
\delta_{B_n}(b)^{-1/2} |\det a|^s  da
\end{eqnarray*}
because $\delta_{B_m}(a)^2
=\delta_{B_n}(b)^{1/2}$.
Now the assertion follows from Proposition~\ref{prop:formal}
(i).
\end{proof}

Let $I_\pi$
be the subspace of $\C(q^{-s})$ 
spanned by $J(s, W, \Phi)$,
where 
$W \in \mathcal{W}(\pi, \psi)$
and
$\Phi \in \Sch{F^m}$.
We shall give an alternative proof of a result by Belt.
%%%%
\begin{prop}[\cite{Belt} Theorem 2.2, \cite{K-R}]\label{prop:frac}
With the notation as above,
$I_\pi$  is a fractional ideal of $\C[q^{-s}, q^{s}]$
which contains $1$.
\end{prop}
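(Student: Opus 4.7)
The plan is to deduce both claims from Theorem~\ref{thm:main_even} (together with its odd-case counterpart, proved in section~\ref{sec:odd}). The substantive step is to show that $I_\pi$ is closed under multiplication by $q^{\pm s}$; granted this, the equality $\fL \cdot \fL^{-1}=1$ produces $1\in I_\pi$ because the second factor is a polynomial in $q^{-s}$.

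First I would prove the scaling identity
\[
J(s, W^h, \Phi) \;=\; |\det h|^{-s}\, J(s, W, \Phi^h), \qquad h\in G_m,
\]
where $W^h(X) := W(X(h\oplus h))$ still lies in $\mathcal{W}(\pi,\psi)$ and $\Phi^h(x) := \Phi(xh^{-1})$ still lies in $\Sch{F^m}$. This is a routine change of variable $g \mapsto g h^{-1}$ in the outer integral, using that $(g\oplus g)(h\oplus h) = (gh\oplus gh)$ and that the invariant measure on $U_m\backslash G_m$ is right $G_m$-invariant. Choosing $h = \mathrm{diag}(\varpi,1,\ldots,1)$ gives $|\det h|^{-s} = q^{s}$; choosing its inverse gives $q^{-s}$. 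Since $W\mapsto W^h$ and $\Phi\mapsto \Phi^h$ are bijections of $\mathcal{W}(\pi,\psi)$ and $\Sch{F^m}$ respectively, as $W$ and $\Phi$ range over their respective spaces, so do $W^h$ and $\Phi^h$; hence $q^{\pm s}\cdot I_\pi \subseteq I_\pi$, and $I_\pi$ is a $\C[q^{-s}, q^s]$-submodule of $\C(q^{-s})$.

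Next, Theorem~\ref{thm:main_even} (or its odd analogue) furnishes an explicit Whittaker newform $W$ and Schwartz function $\Phi_{c(\pi)}$ with $J(s, W, \Phi_{c(\pi)}) = \fL$, which places $\fL\in I_\pi$. By definition $\fL^{-1} = \prod_{1\le i<j\le n}(1-\alpha_i\alpha_j q^{-s})$ is a polynomial in $q^{-s}$, hence lies in $\C[q^{-s}, q^s]$. Applying the stability established in the preceding paragraph,
\[
1 \;=\; \fL^{-1}\cdot \fL \;\in\; I_\pi.
\]

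To finish the fractional-ideal assertion in the strict sense, one still needs a uniform denominator bound: a nonzero $D\in\C[q^{-s}, q^s]$ with $D\cdot I_\pi \subseteq \C[q^{-s}, q^s]$. This is exactly the uniform rationality content of Proposition~1 in \cite{J-S}, section 7, and of Proposition~4.3 of \cite{Belt}, both already invoked above, which bound the poles of every $J(s, W, \Phi)$ in a fixed finite set. The only non-formal ingredient in the present proof is the scaling identity of the second paragraph; once it is available, the inversion trick $\fL^{-1}\cdot \fL=1$ makes the containment $1\in I_\pi$ transparent, and this is precisely the point at which the newform-level input of Theorem~\ref{thm:main_even}---unavailable to \cite{J-S} outside the unramified case---enters the argument.
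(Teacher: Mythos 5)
Your proof is correct and follows essentially the same route as the paper: establish that $I_\pi$ is a $\C[q^{-s},q^{s}]$-module, invoke Belt's Proposition 4.3 for the uniform denominator, and then use Theorem~\ref{thm:main_even} to place $\fL$ in $I_\pi$, whence $1=\fL^{-1}\cdot\fL\in I_\pi$. The only difference is that you prove the module property directly via the change-of-variable identity $J(s,W^h,\Phi)=|\det h|^{-s}J(s,W,\Phi^h)$, whereas the paper simply cites \cite{Kewat} p.~158 for this standard fact.
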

%%%%%
\begin{proof}
By \cite{Kewat} p.~158,
$I_\pi$ is a $\C[q^{-s}, q^{s}]$-module.
Due to \cite{Belt} Proposition 4.3,
there exists a polynomial $Q(X) \in \C[X]$
such that
$Q(q^{-s})I_\pi \subset \C[q^{-s}, q^{s}]$.
So $I_\pi$ is a fractional ideal of $\C[q^{-s}, q^s]$.
By Theorem~\ref{thm:main_even},
$\fL$ is contained in $I_\pi$,
so is $1$.
\end{proof}

By Proposition~\ref{prop:frac},
we can define Jacquet-Shalika's exterior square $L$-function
as follows:
%%%
\begin{defn}[\cite{K-R} Definition 3.4]
Due to Proposition~\ref{prop:frac},
there exists a polynomial $P(X) \in \C[X]$
such that
$P(0) = 1$ and
$I_\pi = (1/P(q^{-s}))$.
We define Jacquet-Shalika's exterior square $L$-function
by
\[
\LJS = \frac{1}{P(q^{-s})}.
\]
\end{defn}

We give an alternative proof of 
results on the non-vanishing of Jacquet-Shalika integral in 
\cite{Belt} and \cite{Kewat}
by using Proposition~\ref{prop:frac}.
%%%%%
\begin{prop}[\cite{Belt} Theorem 2.2, \cite{Kewat} Proposition 5.1]
Let $\pi$ be a unitary irreducible generic representation
of $\mathrm{GL}_{2m}(F)$.
For any $s_0 \in \C$,
there exist $W \in \mathcal{W}(\pi, \psi)$
and $\Phi \in \Sch{F^m}$ such that
$J(s_0, W, \Phi) \neq 0$.
\end{prop}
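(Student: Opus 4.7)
The plan is to deduce the non-vanishing directly from the fractional ideal structure of $I_\pi$ established in Proposition~\ref{prop:frac}, bypassing the more delicate analytic arguments used by Belt and Kewat.

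First, I would exploit the fact that $1 \in I_\pi$. Since $I_\pi$ is by definition the $\C$-linear span of the Jacquet-Shalika integrals, the containment $1 \in I_\pi$ translates into a finite linear identity
\begin{equation*}
\sum_{j=1}^{N} c_j\, J(s, W_j, \Phi_j) \;=\; 1
\end{equation*}
for some $W_j \in \mathcal{W}(\pi, \psi)$, $\Phi_j \in \Sch{F^m}$, and $c_j \in \C$, interpreted as an equality of rational functions in $q^{-s}$. This is the only input from Proposition~\ref{prop:frac} that is needed.

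Next, I would fix an arbitrary $s_0 \in \C$ and split into two cases. If some index $j_0$ in the list above yields a rational function $J(s, W_{j_0}, \Phi_{j_0})$ that has a pole at $s_0$, then that integral trivially does not vanish at $s_0$ (regarded as an element of $\C(q^{-s})$), and we are done. Otherwise, every $J(s, W_j, \Phi_j)$ appearing in the sum is holomorphic at $s_0$, so we can evaluate the identity at $s = s_0$ to get $\sum_j c_j\, J(s_0, W_j, \Phi_j) = 1 \neq 0$; at least one summand must then be nonzero, providing the desired pair $(W_j, \Phi_j)$.

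The main \emph{conceptual} point, more than an obstacle, is to observe that $I_\pi$ being a $\C[q^{-s},q^s]$-module containing $1$ already forces the spanning identity above. Once that is recognized, the entire argument reduces to the elementary observation that a nonzero constant cannot be written as a finite $\C$-linear combination of rational functions all of which vanish at a common point. The novelty of this proof relative to \cite{Belt} and \cite{Kewat} is precisely that the hard work has been packaged into Proposition~\ref{prop:frac}, which itself relied on Theorem~\ref{thm:main_even} to produce an explicit element (namely $\fL$) in $I_\pi$.
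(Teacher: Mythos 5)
Your proof is correct and follows essentially the same route as the paper: the paper likewise invokes Proposition~\ref{prop:frac} to write $1$ as a finite sum $\sum_i J(s, W_i, \Phi_i)$ and then declares the conclusion obvious. Your case analysis (pole at $s_0$ versus evaluation at $s_0$) simply spells out the "obvious" step, so there is nothing substantive to add.
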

%%%%%
\begin{proof}
By Proposition~\ref{prop:frac},
there are $W_1, \ldots, W_k \in \mathcal{W}(\pi, \psi)$
and $\Phi_1, \ldots, \Phi_k \in \Sch{F^m}$
such that
\[
\sum_{i = 1}^k J(s, W_i, \Phi_i) = 1.
\]
Now the proposition is obvious.
\end{proof}

On the holomorphy of Jacquet-Shalika integral,
we obtain the following
%%%
\begin{prop}[cf. \cite{Belt} Theorem 2.2]
For an
irreducible, square integrable representation 
$\pi$ of $\mathrm{GL}_{2m}(F)$,
there exist $W \in \mathcal{W}(\pi, \psi)$
and $\Phi \in \Sch{F^m}$
such that
\[
J(s, W, \Phi) =1.
\]
\end{prop}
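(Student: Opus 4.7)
The plan is to derive this statement almost immediately by combining two results already available in the paper: Theorem~\ref{thm:main_even}, which identifies the Jacquet-Shalika integral of a normalized newform with the formal exterior square $L$-function, and Proposition~\ref{prop:esi}, which says that $\fL = 1$ for essentially square integrable representations.

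First I would observe that a square integrable irreducible representation $\pi$ of $\mathrm{GL}_{2m}(F)$ is in particular unitary, irreducible and generic, so the hypotheses of Theorem~\ref{thm:main_even} are met. Let $W$ be the newform in $\mathcal{W}(\pi, \psi)$ normalized by $W(1) = 1$, and take $\Phi = \Phi_{c(\pi)}$ to be the characteristic function of $\mi^{c(\pi)} \oplus \cdots \oplus \mi^{c(\pi)} \oplus (1+\mi^{c(\pi)}) \subset F^m$. Then Theorem~\ref{thm:main_even} gives
\[
J(s, W, \Phi_{c(\pi)}) = \fL.
\]
Next, since $\pi$ is square integrable it is in particular essentially square integrable, so Proposition~\ref{prop:esi} yields $\fL = 1$. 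Substituting into the previous equality gives $J(s, W, \Phi_{c(\pi)}) = 1$, as required.

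There is essentially no obstacle here: the work has all been done, and the only thing to check is that the hypotheses of the two earlier results are satisfied by a square integrable representation. The mildest point worth mentioning explicitly in the written proof is that square integrability implies genericity (and hence the Whittaker model is defined) and that the newform exists with positive conductor in general, so Theorem~\ref{thm:main_even} applies uniformly whether $c(\pi) = 0$ or $c(\pi) > 0$.
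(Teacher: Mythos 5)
Your proposal is correct and is exactly the paper's own argument: the authors likewise deduce the result by combining Theorem~\ref{thm:main_even} with Proposition~\ref{prop:esi}. The points you flag as worth checking (square integrable $\Rightarrow$ unitary and generic, so the newform exists and the theorem applies) are the right ones and present no difficulty.
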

%%%%%
\begin{proof}
The proposition follows from
Proposition~\ref{prop:esi} and
 Theorem~\ref{thm:main_even}.
\end{proof}

Finally,
we state a result on poles of $\LJS$.
%%%%%
\begin{thm}\label{thm:zeros}
Let $\pi$ be a unitary irreducible generic representation of $\mathrm{GL}_{2m}(F)$.
(i) Suppose that $s_0 \in \C$ is a pole of $L(s, \pi)$
whose order is equal to or more than two.
Then $2s_0$ is a pole of $\LJS$.

(ii)
Suppose that $s_1$ and $s_2$ are two distinct poles of $L(s, \pi)$.
Then $s_1+s_2$ is a pole of $\LJS$.
\end{thm}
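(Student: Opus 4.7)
My plan is to exploit the divisibility of $\LJS$ by the formal factor $\fL$ coming out of Theorem~\ref{thm:main_even}, and then locate the asserted poles by reading off the factored form of $\fL$ directly. Since $\fL \in I_\pi$ by Theorem~\ref{thm:main_even}, and since $\LJS$ generates the fractional ideal $I_\pi$ over $\C[q^{-s}, q^{s}]$ by definition, the quotient
\[
\fL/\LJS \in \C[q^{-s}, q^{s}]
\]
is an exponential polynomial in $s$, hence entire. Consequently every pole of $\fL$ must already be a pole of $\LJS$. The task therefore reduces to producing poles of $\fL$ at $2s_0$ in (i) and at $s_1+s_2$ in (ii).

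Recall that $L(s, \pi) = \prod_{i=1}^n (1-\alpha_i q^{-s})^{-1}$ has a pole at $s_0$ of order equal to the number of indices $i$ with $\alpha_i = q^{s_0}$; in particular such indices satisfy $\alpha_i \neq 0$. Similarly $\fL = \prod_{i<j}(1-\alpha_i\alpha_j q^{-s})^{-1}$ has a pole at $s_*$ of order equal to the number of pairs $i<j$ with $\alpha_i\alpha_j = q^{s_*}$ (and both $\alpha_i, \alpha_j$ then automatically nonzero).

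For (i), the hypothesis on the order of the pole of $L(s,\pi)$ at $s_0$ supplies two distinct indices $i_1 < i_2$ with $\alpha_{i_1} = \alpha_{i_2} = q^{s_0}$; their product is $q^{2s_0}$, so the factor $(1-\alpha_{i_1}\alpha_{i_2}q^{-s})^{-1}$ of $\fL$ exhibits a pole at $s=2s_0$, which transfers to $\LJS$ by the first step. For (ii), pick indices $i_1, i_2$ realizing $\alpha_{i_1} = q^{s_1}$ and $\alpha_{i_2}=q^{s_2}$; since $s_1 \neq s_2$ these $\alpha$-values differ, so $i_1 \neq i_2$, and after reordering we may assume $i_1 < i_2$. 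Then $\alpha_{i_1}\alpha_{i_2} = q^{s_1+s_2}$ is nonzero and yields a pole of $\fL$ at $s_1+s_2$, hence of $\LJS$.

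The only substantive content in the plan is the initial transfer of poles from $\fL$ to $\LJS$, which rests entirely on Theorem~\ref{thm:main_even} together with the defining property of $\LJS$; once this is in hand, the combinatorial identification of poles from the parameters $\alpha_i$ is immediate and there is no serious obstacle. One minor point worth confirming in passing is that the displayed factor producing the pole is not cancelled by any other factor of $\fL$, but this is automatic since $\fL$ is a pure denominator and has no numerator capable of cancelling it.
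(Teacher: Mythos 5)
Your argument is correct and is essentially the paper's own proof: Theorem~\ref{thm:main_even} places $\fL$ in the ideal $\LJS\,\C[q^{-s},q^{s}]$, so every pole of $\fL$ is a pole of $\LJS$, and the asserted poles of $\fL$ at $2s_0$ and $s_1+s_2$ are read off directly from its definition as a product over pairs. You have merely spelled out the combinatorial step that the paper leaves implicit.
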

%%%%%
\begin{proof}
By Theorem~\ref{thm:main_even},
the formal exterior square $L$-function $\fL$ is contained 
in the set $\LJS \C[q^{-s}, q^s]$.
So the theorem follows from the definition of $\fL$.
\end{proof}

\Section{Jacquet-Shalika integral: the odd case}\label{sec:odd}
In this section,
we consider the case when $n = 2m+1$.
Let $\pi$ be a unitary irreducible generic representation of
$G_n$.
Then
Jacquet-Shalika integral for $\pi$ has the form
$J(s, W)$, $W \in \mathcal{W}(\pi, \psi)$:
\begin{eqnarray*}
& J(s, W)
= \displaystyle
\int_{U_m\backslash G_m} \int_{V_m\backslash M_m} W\left(
\sigma \left(
\begin{array}{ccc}
1_m & Z &0\\
0 & 1_m & 0\\
0 & 0 & 1
\end{array}
\right)
\left(
\begin{array}{ccc}
g & 0 & 0\\
0 & g & 0\\
0 & 0 & 1
\end{array}
\right)
\right)
\psi(-\mathrm{tr}{Z}) dZ
|\det g|^{s-1}  dg,
\end{eqnarray*}
where 
$\sigma$ is the permutation of degree $n = 2m+1$
given by
\begin{eqnarray*}
& 
\sigma = 
\left(
\begin{array}{cccc|cccc|c}
1 & 2 & \ldots & m & m+1 & m+2 & \ldots & 2m & 2m+1\\
1 & 3 & \ldots & 2m-1 & 2 & 4 & \ldots & 2m & 2m+1
\end{array}
\right).
\end{eqnarray*}
We normalize Haar measures on 
$U_m\backslash G_m$ and $V_m\backslash M_m$
so that the volumes of $K_{m,0}$ and of $V_m\backslash (V_m +M_m(\ri))$
are one respectively.
Similar results to those for the even case hold.
We shall be brief here.

%%%%%
\begin{thm}\label{thm:main_odd}
Let $\pi$ be a unitary irreducible generic representation of $\mathrm{GL}_{2m+1}(F)$
and let
$W$ be the newform in $\mathcal{W}(\pi, \psi)$
such that $W(1) = 1$.
Then we have
\[
J(s, W) = \fL.
\]
\end{thm}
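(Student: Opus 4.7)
The plan is to mirror the proof of Theorem~\ref{thm:main_even} with the modifications required by $n = 2m+1$. If $c(\pi) = 0$ the claim reduces to the unramified computation in \cite{J-S}, so assume $c = c(\pi) > 0$. The overall strategy is to apply the Iwasawa decomposition on $G_m$, use the right $K_{n,c}$-invariance of $W$ to collapse the compact part, conjugate the argument through the permutation $\sigma$, and finally identify the resulting torus integral with $\fL$ via Proposition~\ref{prop:formal}(ii).

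The new feature peculiar to the odd case is that no Schwartz function is needed to restrict the support of $g$: writing $g = uak$ with $u \in U_m$, $a \in T_m$, $k \in K_{m,0}$, the embedded block matrix
\[
\tilde k = \left(
\begin{array}{ccc}
k & 0 & 0\\
0 & k & 0\\
0 & 0 & 1
\end{array}
\right)
\]
automatically has bottom row $(0, \ldots, 0, 1)$ because of the $1$ in the lower right corner, hence lies in $K_{n,c}$ and fixes $W$. Therefore the $k$-integration contributes the normalized volume of $K_{m,0}$, which is one. Then, as in the even case, the substitution $Z \mapsto a^{-1}Za$ on $V_m\backslash M_m$ produces an extra factor $\delta_{B_m}(a)^{-1}$ (and preserves $\mathrm{tr}\, Z$), and since the odd permutation $\sigma$ fixes $2m+1$ it still lies in $K_{n,c}$, so conjugating by $\sigma$ rewrites the $W$-argument as $b \cdot \sigma \tilde Z \sigma^{-1}$, where $b = \mathrm{diag}(a_1, a_1, \ldots, a_m, a_m, 1)$.

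From here I would imitate the even case verbatim: performing one more Iwasawa decomposition $\sigma \tilde Z \sigma^{-1} = u_Z t_Z k_Z$, one checks that $t_Z \in T_{n,1}$ and $k_Z \in K_{n,c}$ (both follow because $\sigma$ fixes $n$, so the last row of $\sigma \tilde Z \sigma^{-1}$ is $(0,\ldots,0,1)$). Applying Propositions 4 and 5 of \cite{J-S} section 5 together with Proposition~\ref{prop:nonzero} then forces $u_Z = t_Z = 1$ and $Z \in V_m + M_m(\ri)$ on the support, so the integral collapses to
\[
J(s, W) = \int_{T_m} W(b)\, \delta_{B_m}(a)^{-2} |\det a|^{s-1}\, da.
\]
Comparison with Proposition~\ref{prop:formal}(ii) then reduces to the elementary modular identity $\delta_{B_m}(a)^{2} |\det a| = \delta_{B_n}(b)^{1/2}$; the extra factor $|\det a|$ (compared to the even-case identity $\delta_{B_m}(a)^{2} = \delta_{B_n}(b)^{1/2}$) is precisely what compensates the shifted exponent $|\det a|^{s-1}$ in the odd Jacquet-Shalika integral. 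The only point requiring genuine care is the adaptation of \cite{J-S} Propositions 4 and 5 to the odd permutation $\sigma$; since that permutation differs from its even counterpart only by fixing the last coordinate, the combinatorial analysis there carries over with purely cosmetic changes.
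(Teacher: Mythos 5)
Your proposal is correct and follows exactly the route the paper intends: the published proof of this theorem is a one-line reference to the argument for Theorem~\ref{thm:main_even} combined with Proposition~\ref{prop:formal}(ii), and you have supplied precisely the details that transfer, including the correct observation that $\mathrm{diag}(k,k,1)$ and the odd $\sigma$ lie in $K_{n,c}$ because their last row is $(0,\ldots,0,1)$, and the correct modular identity $\delta_{B_m}(a)^{2}\,|\det a| = \delta_{B_n}(b)^{1/2}$ that absorbs the shifted exponent $|\det a|^{s-1}$. No changes needed.
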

%%%%%
\begin{proof}
Along the lines in the proof of Theorem~\ref{thm:main_even},
the theorem follows from
Proposition~\ref{prop:formal}
(ii).
\end{proof}

Let $I_\pi$
be the subspace of $\C(q^{-s})$ 
spanned by $J(s, W)$,
$W \in \mathcal{W}(\pi, \psi)$.
As in Proposition~\ref{prop:frac},
the set 
$I_\pi$  is a fractional ideal of $\C[q^{-s}, q^{s}]$
which contains $1$.
Thus we can define 
Jacquet-Shalika's exterior square $L$-function by
\[
\LJS = \frac{1}{P(q^{-s})},
\]
where 
$P(X)$ is a polynomial in $\C[X]$
such that
$P(0) = 1$ and
$I_\pi = (1/P(q^{-s}))$.

In the odd case,
Schwartz functions are not involved with Jacquet-Shalika integral.
So we get the following
%%%%%
\begin{prop}[cf. \cite{Belt} Theorem 6.1]
Let $\pi$ be a unitary irreducible generic representation
of $\mathrm{GL}_{2m+1}(F)$
Then
there exists $W \in \mathcal{W}(\pi, \psi)$
such that
$J(s, W) =1$.
\end{prop}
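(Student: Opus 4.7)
The plan is to derive the conclusion directly from the fact that $I_\pi$ contains $1$, which is asserted in the sentence preceding the proposition (and which follows from Theorem~\ref{thm:main_odd} by the same argument as in Proposition~\ref{prop:frac} for the even case). Since $I_\pi$ is by definition the $\C$-linear subspace of $\C(q^{-s})$ spanned by the functions $s \mapsto J(s, W)$ as $W$ ranges over $\mathcal{W}(\pi, \psi)$, the containment $1 \in I_\pi$ unfolds into the existence of finitely many Whittaker functions $W_1, \ldots, W_k \in \mathcal{W}(\pi, \psi)$ and scalars $c_1, \ldots, c_k \in \C$ such that
\[
\sum_{i=1}^{k} c_i\, J(s, W_i) = 1
\]
as an identity in $\C(q^{-s})$.

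The key observation is that, in contrast to the even case, the odd Jacquet-Shalika integral does not involve any Schwartz function, so the assignment $W \mapsto J(s, W)$ is purely $\C$-linear on $\mathcal{W}(\pi, \psi)$. Consequently, setting
\[
W = \sum_{i=1}^{k} c_i W_i \in \mathcal{W}(\pi, \psi),
\]
linearity of the integral in its Whittaker argument yields $J(s, W) = \sum_i c_i J(s, W_i) = 1$, which is exactly the required conclusion.

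No serious technical obstacle is anticipated. The only subtle point worth highlighting is the contrast with the even case: because $J(s, W, \Phi)$ is bilinear in $(W, \Phi)$ rather than linear in a single variable, a finite $\C$-combination $\sum_i c_i J(s, W_i, \Phi_i) = 1$ cannot in general be absorbed into a single pair $(W, \Phi)$. This is why, in the even setting, the analogue of this proposition must be restricted to essentially square integrable representations, for which $\fL = 1$ and a newform already suffices via Theorem~\ref{thm:main_even}; in the odd case the linearity of $J(s, W)$ removes that restriction.
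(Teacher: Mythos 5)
Your argument is correct and is essentially the paper's own proof: the authors likewise deduce the result from $1 \in I_\pi$, using that in the odd case the integral $J(s,W)$ is linear in $W$ alone (no Schwartz function), so a finite $\C$-combination $\sum_i c_i J(s,W_i)=1$ collapses to $J(s,\sum_i c_i W_i)=1$. Your closing remark contrasting this with the bilinear even case matches the paper's motivation for stating the two propositions differently.
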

%%%%%
\begin{proof}
The proposition follows from the fact that 
$I_\pi$ contains $1$.
\end{proof}

We note that Theorem~\ref{thm:zeros}
holds for the odd case.

\Section{Bump-Friedberg integral}\label{sec:BF}
 Bump and Friedberg introduced
another kind of Rankin-Selberg type zeta integrals
related to exterior square $L$-functions
in \cite{B-F}.
In this section,
we treat Bump-Friedberg integrals.
Set $m =\lfloor (n+1)/2\rfloor$
and $m' = \lfloor n/2\rfloor$.
We define an embedding $J: G_m \times G_{m'}
\rightarrow G_n$
by
\[
J(g, g')_{k, l}
=
\left\{
\begin{array}{cl}
g_{ij}, & \mbox{if}\ k = 2i,\ l = 2j\\
g'_{ij}, & \mbox{if}\ k = 2i-1,\ l = 2j-1\\
0, & \mbox{otherwise},
\end{array}
\right.
\]
for $n$ even,
and by
\[
J(g, g')_{k, l}
=
\left\{
\begin{array}{cl}
g_{ij}, & \mbox{if}\ k = 2i-1,\ l = 2j-1\\
g'_{ij}, & \mbox{if}\ k = 2i,\ l = 2j\\
0, & \mbox{otherwise},
\end{array}
\right.
\]
for $n$ odd.

Let $\pi$ be an irreducible generic representation of $G_n$.
For $W \in \mathcal{W}(\pi, \psi)$
and $\Phi \in \Sch{F^m}$,
we define
\begin{eqnarray*}
Z(s_1, s_2, W, \Phi)
& = &
\int_{U_{m'}\backslash G_{m'}}
\int_{U_m\backslash G_m}
W(J(g, g')) \Phi(e_mg) |\det g|^{1/2+s_2-s_1} |\det g'|^{s_1-1/2}
dg dg'
\end{eqnarray*}
if $n$ is even,
and 
\begin{eqnarray*}
Z(s_1, s_2, W, \Phi)
& = &
\int_{U_{m'}\backslash G_{m'}}
\int_{U_m\backslash G_m}
W(J(g, g')) \Phi(e_mg) |\det g|^{s_1} |\det g'|^{s_2-s_1}
dg dg'
\end{eqnarray*}
if $n$ is odd.

We shall show that 
Bump-Friedberg integral attains the formal exterior square $L$-function
when the Whittaker function is associated to a newform.
%%%%%
\begin{thm}\label{thm:B-F}
Let $\pi$ be an irreducible generic representation of $\mathrm{GL}_{n}(F)$
and let
$W$ be a non-zero newform in $\mathcal{W}(\pi, \psi)$.
Then 
$Z(s_1, s_2, W, \Phi_{c(\pi)})$ is equal to $L(s_1, \pi)\fLl$
up to constant,
where 
$\Phi_{c(\pi)}$ is the characteristic function of
$\mi^{c(\pi)} \oplus \cdots \oplus \mi^{c(\pi)} \oplus 
(1+\mi^{c(\pi)}) \subset F^m$.
\end{thm}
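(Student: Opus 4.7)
The plan is to imitate the proof of Theorem~\ref{thm:main_even} (and its odd-case analogue Theorem~\ref{thm:main_odd}) by reducing $Z(s_1, s_2, W, \Phi_c)$ to a torus integral to which the explicit Matringe--Miyauchi formula for the newform Whittaker function (\cite{M5} Theorem~4.1) can be applied. Writing $c = c(\pi)$ and applying Iwasawa to $g$, Lemma~\ref{lem:decomp} identifies $\Phi_{c}(e_m g)$ with the characteristic function of $U_m T_{m, 1} K_{m, c}$, so the $g$-integral collapses onto $T_{m, 1}$. The structural fact to verify directly is that the embedding $J$ is a group homomorphism whose two factors commute with each other; in particular, $J(K_{m, c}, 1)$ lies in $K_{n, c}$ (its last row is the last row of $k \in K_{m, c}$ interleaved with zeros, hence congruent to $(0, \ldots, 0, 1)$ modulo $\mi^c$), so the $K_{m, c}$-part of $g$ is absorbed by the $K_{n, c}$-invariance of $W$.

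Next, I would apply Iwasawa $g' = u' a' k'$ on the $G_{m'}$-variable. A direct inspection shows that the superdiagonal entries of $J(1, u')$ all vanish, so $\psi(J(1, u')) = 1$ and the integrand is genuinely left $U_{m'}$-invariant; similarly, the last row of $J(1, k')$ is automatically the standard vector $(0, \ldots, 0, 1)$, so $J(1, k') \in K_{n, c}$ and is absorbed by $W$. The integral then reduces to one over $T_{m, 1} \times T_{m'}$ of $W(J(a, a'))$ against the appropriate modulus characters and twists $|\det a|^{\bullet}$, $|\det a'|^{\bullet}$. Proposition~\ref{prop:nonzero} applied to the diagonal matrix $J(a, a')$ restricts the support to configurations whose entries have interleaving valuations, and \cite{M5}~Theorem~4.1 evaluates $W(J(a, a'))$ explicitly as $\delta_{B_n}^{1/2}(J(a, a'))$ times a Schur polynomial $s_\lambda(\alpha_1, \ldots, \alpha_n)$ whose partition $\lambda$ is read off from those valuations.

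The final step is to identify the resulting double sum over interleaved partitions as $L(s_1, \pi)\cdot\fLl$. Combining $L(s_1, \pi) = \sum_{l \geq 0} q^{-l s_1} s_{(l, 0, \ldots, 0)}(\alpha)$ (the generating function for complete homogeneous symmetric polynomials) with the partition-sum formulae \eqref{eq:fL_even} and \eqref{eq:fL_odd} for $\fLl$, this should follow from the Pieri/branching rule, applied so as to split each interleaved partition $\lambda$ into a ``doubled'' sub-partition (responsible for the $\fLl$-factor via Proposition~\ref{prop:formal}) and a horizontal strip (responsible for the $L(s_1, \pi)$-factor). I expect this combinatorial identification --- carried out uniformly across the two parities of $n$, while keeping careful track of the twist exponents ($\tfrac{1}{2} + s_2 - s_1$ and $s_1 - \tfrac{1}{2}$ in the even case, $s_1$ and $s_2 - s_1$ in the odd case) and of the interplay among $\delta_{B_m}$, $\delta_{B_{m'}}$ and $\delta_{B_n}$ --- to be the main technical obstacle.
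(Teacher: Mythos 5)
Your proposal follows essentially the same route as the paper: Iwasawa decomposition in both variables, Lemma~\ref{lem:decomp} to collapse the $g$-integral onto $T_{m,1}$, absorption of the compact parts $J(k,k')$ into $K_{n,c(\pi)}$ using that $J$ is a homomorphism, triviality of $\psi$ on $J(U_m,U_{m'})$, and then \cite{M5} Theorem~4.1 to turn the torus integral into a sum of Schur polynomials. The one place where you diverge is the final combinatorial identification, which you propose to establish by the Pieri rule and openly flag as the main remaining obstacle; the paper instead quotes the identity (3.3) of \cite{B-F} directly, obtaining
$Z(s_1,s_2,W,\Phi_c)=(1-\omega q^{-ms_2})\,L(s_1,\pi)\,\fLl$ with $\omega=\alpha_1\cdots\alpha_n$, and then observes that $\omega=0$ because a positive conductor forces $\deg L(s,\pi)<n$. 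Two cautions if you carry out the Pieri computation yourself: (1) the sum you obtain (with the last part of the partition pinned to $0$ by the normalization $a_m=1$ in $T_{m,1}$) is \emph{not} identically $L(s_1,\pi)\fLl$ as a symmetric-function identity --- the extra factor $(1-\omega q^{-ms_2})$ is genuinely there and only disappears thanks to $\omega=0$; your sketch as written would not surface this. (2) You should dispose of the unramified case $c(\pi)=0$ separately (the paper cites \cite{B-F} Theorem~3), since Lemma~\ref{lem:decomp} and the vanishing of $\omega$ both require $c(\pi)>0$. With the identity supplied by citation or by a careful Pieri argument including the $\omega$-factor, your proof is complete and matches the paper's.
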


%%%%%
\begin{proof}
If $c(\pi)$ is zero,
then the theorem follows from \cite{B-F} Theorem 3
with easy modification.
We assume that $c = c(\pi)$ 
is positive.
Suppose that $n$ is even.
Then we have $m = m' = n/2$.
For $g, g' \in G_m$,
let
\[
g = uak,\ g' = u' a' k'
\]
be their Iwasawa decompositions in $G_m = U_m T_m K_{m, 0}$.
By Lemma~\ref{lem:decomp}, the function
$g \mapsto \Phi_c(e_m g)$ is the characteristic function
of $U_m T_{m, 1}K_{m, c}$.
Thus,
under a suitable choice of Haar measures,
 we obtain
\begin{eqnarray*}
&  & Z(s_1, s_2, W, \Phi_c)\\
& = &
\int_{K_{m, 0}}
\int_{T_{m}}
\int_{K_{m, c}}
\int_{T_{m,1}}
W(J(ak, a'k')) |\det a|^{1/2+s_2-s_1} |\det a'|^{s_1-1/2}
\delta_{B_m}(a)^{-1}
\delta_{B_m}(a')^{-1}
da dk da' dk'.
\end{eqnarray*}
We have
$J(ak, a'k') = J(a, a') J(k, k')$
and $J(k, k') \in K_{n, c}$
since $k$ belongs to $K_{m, c}$.
Observe that $\delta_{B_m}(a)
\delta_{B_m}(a')
 |\det a|^{-1/2} |\det a'|^{1/2}
= \delta_{B_n}(J(a, a'))^{1/2}$.
Hence we obtain
\begin{eqnarray*}
 Z(s_1, s_2, W, \Phi_c)
& = &
\int_{T_{m}}
\int_{T_{m,1}}
W(J(a, a')) |\det a|^{s_2-s_1} |\det a'|^{s_1}
\delta_{B_n}(J(a, a'))^{-1/2}
da da'.
\end{eqnarray*}
Here we note  that $W$ is fixed by $K_{n, c}$.
By \cite{M5} Theorem 4.1,
we have
\begin{eqnarray*}
 Z(s_1, s_2, W, \Phi_c)
 =
 \sum s_{(f_1, f_2, \ldots, f_{n-1}, 0)}(\alpha)
 q^{-s_2\sum_{i = 1}^{m-1}f_{2i}}
 q^{-s_1\sum_{i=1}^{n-1}(-1)^{i+1}f_i},
\end{eqnarray*}
where the summation is over all
$(f_1, f_2, \ldots, f_{n-1}) \in \Z^{n-1}$
such that $f_1 \geq f_2 \geq \ldots \geq f_{n-1} \geq 0$.
Due to \cite{B-F} (3.3),
we get
\begin{eqnarray*}
 Z(s_1, s_2, W, \Phi_c)
& = &
(1-\omega q^{-ms_2})L(s_1, \pi)\fLl,
\end{eqnarray*}
where $\omega = \alpha_1 \cdots \alpha_n$ (for the definition of $\alpha_i$, see (\ref{eq:L})).
Since we are assuming that $c(\pi)$ is positive,
we have $\omega = 0$,
so that 
\begin{eqnarray*}
 Z(s_1, s_2, W, \Phi_c)
& = &
L(s_1, \pi)\fLl,
\end{eqnarray*}
as required.

We can prove the theorem for the odd case
in a similar fashion.
So the proof is complete.
\end{proof}

\section{The Galois side via the local Langlands correspondence}\label{sec:append}
In previous sections, we have defined $\mathscr{L}(s,\pi, \wedge^2)$ and shown that 
it divides $L_{{JS}}(s,\pi,\wedge^2)$. 
In this section, we collect the facts corresponding these in the Galois side 
via the local Langlands correspondence (say LLC for short). 

Let $\Omega$ be an algebraically closed field of characteristic zero. 
Let $F$ be a finite extension of $\Q_p$ and $\F_q$ be its residue field with the cardinality $q$.  
Define the inertia group $I_F$ by the following exact sequence: 
$$1\lra I_F\lra {\rm Gal}(\overline{F}/F)\stackrel{\iota}{\lra} {\rm Gal}(\overline{\F}_q/\F_q) \lra 1.$$
Take the geometric Frobenius element ${\rm Frob}_q\in {\rm Gal}(\overline{\F}_q/\F_q)\simeq \widehat{\Z}$. 
Then the Weil group $W_F$ is defined by 
the inverse image of $\Z$-span ${\rm Frob}^{\Z}_q$ by $\iota$, 
hence we have 
$$1\lra I_F\lra W_F\stackrel{\iota}{\lra} 
{\rm Frob}^\Z_q\simeq \Z \lra 1.$$ 
If we fix a lift $\Phi$ of ${\rm Frob}_q$, then 
$W_F$ can be written as $W_F=\coprod_{n\in \Z}\Phi^n I_F$. 
 
A Weil-Deligne representation of $W_F$ is a couple of a smooth representation 
$r=(r,V)$ of $W_F$ with a finite dimensional vector space $V$ over $\Omega$ and   
$N\in {\rm End}_{\Omega}(V)$ satisfying the following relation:
if $g=\Phi^n\sigma, n\in\Z, \sigma\in I_F$, then
$$r(g)Nr(g)^{-1}=q^{-n}N.$$

Let $(r,N)$ be a Weil-Deligne representation of $W_F$. 
By Jordan decomposition, $r(\Phi)$ can be written as the product of 
a semisimple matrix $T$ and a unipotent matrix $U$. Then for $g=\Phi^n\sigma, n\in\Z, \sigma\in I_F$ we define 
$$r^{{\rm ss}}(g):=T^n r(\sigma).$$
We call $r^{{\rm ss}}$ the $\Phi$-semisimplification of $r$. 
It is easy to see that a couple $(r^{{\rm ss}},N)$ forms a Weil-Deligne representation. 
We say that $r$ is  $\Phi$-semisimple if 
$r^{{\rm ss}}=r$. 
For any  Weil-Deligne representation $(r,N)$, we define the $L$-function of it by 
$$L(s,r):={\rm det}(1-q^{-s}r(\Phi)|({\rm Ker}N)^{I_F})^{-1}.$$

For each integer $n\ge 1$, 
denote by $\mathscr{G}_F(n)$ the set of isomorphism classes of  $\Phi$-semisimple 
Weil-Deligne representations of dimension $n$ and $\mathscr{A}_F(n)$ the set of isomorphism classes of smooth 
irreducible representations of GL$_n(F)$. Then by \cite{h&t},\cite{hen1}, there exists a canonical 
bijective correspondence $\mathscr{G}_F(n)\stackrel{\tiny{{\rm LLC}}}{\lra} \mathscr{A}_F(n),\ \rho\mapsto \pi(\rho)$ which is 
preserving $L$-functions and $\varepsilon$-factors of both sides (see \cite{hen2}, \cite{hen3}). 

Let $\pi$ be an irreducible generic representation of GL$_n(F)$ and $\rho$ be the corresponding 
Weil-Deligne representation via LLC.  
Recall that by using ${L}(s,\pi)=\ds\prod_{i=1}^n (1-\alpha_i q^{-s})^{-1}$, we define  
$$\mathscr{L}(s,\pi,\wedge^2)=\prod_{1\le i<j\le n}(1-\alpha_i\alpha_j q^{-s})^{-1}.$$

\begin{thm}\label{thm:galois}
The following properties are satisfied:

\medskip
$($i$)$ $\mathscr{L}(s,\pi,\wedge^2)=
{\rm det}(1-q^{-s}\rho(\Phi)|\wedge^2({\rm Ker}N)^{I_F})^{-1}$,

\medskip
$($ii$)$ $\mathscr{L}(s,\pi,\wedge^2)$ divides $L(s,\wedge^2\rho)$.
\end{thm}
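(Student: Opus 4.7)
The plan is to exploit the LLC dictionary to translate $L(s,\pi)$ into the Galois side and then deduce both statements from elementary linear algebra on the underlying Weil--Deligne representation $(\rho,N)$ on an $n$-dimensional space $V$. By LLC and the definition of the $L$-factor,
$$L(s,\pi)=L(s,\rho)=\det(1-q^{-s}\rho(\Phi)|(\ker N)^{I_F})^{-1}.$$
Setting $k=\dim(\ker N)^{I_F}$ and writing $\beta_1,\ldots,\beta_k$ for the eigenvalues of $\rho(\Phi)$ on $(\ker N)^{I_F}$, the multiset $\{\beta_1,\ldots,\beta_k\}$ agrees with the nonzero $\alpha_i$'s in the factorization $L(s,\pi)=\prod_{i=1}^n(1-\alpha_iq^{-s})^{-1}$, while the remaining $n-k$ of the $\alpha_i$ must be taken to be zero.

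For (i), any pair $(i,j)$ with $\alpha_i\alpha_j=0$ contributes the trivial factor $1$ to $\mathscr{L}(s,\pi,\wedge^2)$, so this $L$-function collapses to $\prod_{1\le i<j\le k}(1-\beta_i\beta_jq^{-s})^{-1}$. On the other hand, $\wedge^2\rho(\Phi)$ acts on $\wedge^2(\ker N)^{I_F}$ with eigenvalues precisely $\{\beta_i\beta_j\}_{i<j}$, so this collapsed product equals $\det(1-q^{-s}\rho(\Phi)|\wedge^2(\ker N)^{I_F})^{-1}$, proving (i).

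For (ii), I would first equip $\wedge^2\rho$ with its natural Weil--Deligne structure: the nilpotent endomorphism is $\wedge^2 N(v\wedge w)=Nv\wedge w+v\wedge Nw$, and the required compatibility $\wedge^2\rho(g)\cdot\wedge^2 N\cdot\wedge^2\rho(g)^{-1}=q^{-n}\wedge^2 N$ for $g=\Phi^n\sigma$ follows from the corresponding identity for $(\rho,N)$ by the Leibniz rule. Thus
$$L(s,\wedge^2\rho)=\det(1-q^{-s}\wedge^2\rho(\Phi)|(\ker\wedge^2 N)^{I_F})^{-1}.$$
The crux is then to realize $\wedge^2(\ker N)^{I_F}$ as a $\rho(\Phi)$-stable subspace of $(\ker\wedge^2 N)^{I_F}$: the inclusion is immediate from the Leibniz rule (if $Nv=Nw=0$ then $\wedge^2 N(v\wedge w)=0$), $I_F$-invariance of simple wedges of $I_F$-fixed vectors is obvious, and $\rho(\Phi)$-stability follows because $(\ker N)^{I_F}$ itself is $\rho(\Phi)$-stable, thanks to the relation $\rho(\Phi)N\rho(\Phi)^{-1}=q^{-1}N$ and the fact that $\Phi$ normalizes $I_F$. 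Multiplicativity of characteristic polynomials along a $\rho(\Phi)$-stable filtration then gives divisibility of $\det(1-q^{-s}\rho(\Phi)|\wedge^2(\ker N)^{I_F})$ into $\det(1-q^{-s}\wedge^2\rho(\Phi)|(\ker\wedge^2 N)^{I_F})$ in $\C[q^{-s}]$, which combined with (i) yields (ii).

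The only delicate point in the argument is the bookkeeping in (i): the formal product $\prod_{i<j}(1-\alpha_i\alpha_jq^{-s})^{-1}$ taken over the padded set of $n$ roots must be identified with an honest characteristic polynomial on the possibly smaller geometric subspace $\wedge^2(\ker N)^{I_F}$. Once the padding by zeros is correctly accounted for, no genuine obstacle remains; in particular, the inclusion $\wedge^2(\ker N)^{I_F}\subset(\ker\wedge^2 N)^{I_F}$ is typically strict whenever $N\neq 0$, which is exactly the source of the distinction between $\mathscr{L}(s,\pi,\wedge^2)$ and $L(s,\wedge^2\rho)$.
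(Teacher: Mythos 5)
Your argument is correct and follows essentially the same route as the paper: part (i) is the bookkeeping identification of the nonzero $\alpha_i$ with the eigenvalues of $\rho(\Phi)$ on $(\ker N)^{I_F}$ (the paper dismisses this as immediate from the definitions), and part (ii) rests on exactly the paper's chain of $\Phi$-stable inclusions $\wedge^2(\ker N)^{I_F}\subset(\wedge^2\ker N)^{I_F}\subset\bigl(\ker(N\otimes 1+1\otimes N)\bigr)^{I_F}$ together with multiplicativity of characteristic polynomials. Your write-up merely supplies details the paper leaves implicit (e.g.\ the $\rho(\Phi)$-stability of $(\ker N)^{I_F}$ via $\rho(\Phi)N\rho(\Phi)^{-1}=q^{-1}N$), so there is nothing to correct.
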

\begin{proof}By definition, (i) is easy to follow. 
Note that $\wedge^2\rho=(\wedge^2\rho, N\otimes 1+1\otimes N)$. 
Then we see that $\wedge^2 {\rm Ker}N\subset {\rm Ker}(N\otimes 1+1\otimes N)$ and hence we have 
$$\wedge^2 ({\rm Ker}N)^{I_F}\subset (\wedge^2 {\rm Ker}N)^{I_F}\subset {\rm Ker}(N\otimes 1+1\otimes N)^{I_F}.$$
These inclusions are stable under the action of $\Phi$, hence the claim of (ii). 
\end{proof}

In what follows we study when our $\mathscr{L}(s,\pi, \wedge^2)$ coincides with $L(s,\pi,\wedge^2)$. 
To do this, we first prove the following:
\begin{lem}\label{psr}Let $\pi={\rm Ind}^{G_n}_{B_n}(\chi_1\otimes\cdots\otimes\chi_n)$ be a 
principal series representation. 
Then 
$$L(s,\pi,\wedge^2)=\prod_{1\le i<j\le n}L(s,\chi_i\chi_j).$$
\end{lem}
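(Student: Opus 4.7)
\smallskip
\noindent
The strategy is to transport the identity to the Weil side via the local Langlands correspondence, where the exterior square and the $L$-factor are both computable in elementary linear algebra. First, I would recall that under LLC the principal series $\pi = \mathrm{Ind}^{G_n}_{B_n}(\chi_1 \otimes \cdots \otimes \chi_n)$ corresponds to the $\Phi$-semisimple Weil-Deligne representation
\[
\rho = \bigoplus_{i=1}^n \chi_i, \qquad N = 0,
\]
where each $\chi_i$ is viewed as a character of $W_F$ through local class field theory. The input I need here is the standard compatibility of LLC with parabolic induction for principal series (so that the parameter of an induced representation is the direct sum of the parameters of the inducing characters). Since $N=0$ on $\rho$, the definition $L(s,\pi,\wedge^2)=L(s,\wedge^2\rho)$ used throughout this section unwinds cleanly.

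Next, I would compute $\wedge^2 \rho$ as a Weil-Deligne representation. The operator on $\wedge^2 \rho$ is $N\otimes 1 + 1\otimes N$, which vanishes because $N=0$ on $\rho$; thus $\ker(N_{\wedge^2 \rho}) = \wedge^2 V$. As a representation of $W_F$, the exterior square of a sum of one-dimensional representations decomposes as
\[
\wedge^2 \rho \;=\; \bigoplus_{1\le i<j\le n} \chi_i\chi_j,
\]
again with trivial monodromy. This is the only substantive linear-algebra step, and it is immediate because each $\chi_i$ is one-dimensional so there are no internal $\wedge^2$-contributions, only the cross terms.

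Finally, I would apply the definition of the $L$-factor of a Weil-Deligne representation to the decomposition above. Since taking $I_F$-invariants commutes with direct sums and the characteristic polynomial of $\Phi$ on a direct sum factors, we obtain
\[
L(s,\wedge^2\rho) \;=\; \prod_{1\le i<j\le n} \det\!\bigl(1-q^{-s}(\chi_i\chi_j)(\Phi)\,\bigm|\, (\chi_i\chi_j)^{I_F}\bigr)^{-1}.
\]
Each factor is exactly $L(s,\chi_i\chi_j)$: it equals $(1-(\chi_i\chi_j)(\Phi)q^{-s})^{-1}$ when $\chi_i\chi_j$ is unramified and equals $1$ otherwise, which matches the $\mathrm{GL}_1$ Tate $L$-factor of the character $\chi_i\chi_j$ of $F^\times$ via local class field theory. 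Combined with the compatibility of LLC with the definition $L(s,\pi,\wedge^2)=L(s,\wedge^2\rho)$, this yields the claimed identity.

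The only real obstacle is invoking the correct LLC compatibility for principal series: namely that the parameter of $\mathrm{Ind}^{G_n}_{B_n}(\chi_1\otimes\cdots\otimes\chi_n)$ is $\bigoplus_i \chi_i$ with $N=0$, and that the $\mathrm{GL}_1$ local Langlands correspondence identifies $L(s,\chi_i\chi_j)$ on the automorphic side with the Weil-Deligne $L$-factor of the one-dimensional representation $\chi_i\chi_j$. Once these two standard compatibilities are cited, the remainder of the proof is purely formal.
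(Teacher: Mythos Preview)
Your argument is correct and follows essentially the same route as the paper: pass to the Weil--Deligne side via LLC to obtain $\rho=\bigoplus_i\chi_i$ with $N=0$, decompose $\wedge^2\rho=\bigoplus_{i<j}\chi_i\chi_j$, and read off the $L$-factor as the product $\prod_{i<j}L(s,\chi_i\chi_j)$. The paper's proof is simply a terser version of yours, citing Kudla's exposition for the LLC compatibility you flag as the one nontrivial input.
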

\begin{proof}
Let $\rho=(\rho,N)$ be the corresponding 
Weil-Deligne representation via LLC. Then $N=0$ and $\rho:W_F
\twoheadrightarrow  W^{{\rm ab}}_F\simeq F^{\times}
\stackrel{\oplus_{i=1}^n \chi_i }{\lra} {\rm GL}_n(\Omega)$ (cf. Theorem 4.2.1 of \cite{kudla}), hence we have 
$\wedge^2\rho=\oplus_{1\le i<j\le n}\Omega(\chi_i\chi_j)$. 
Then we have that 
$$L(s,\pi,\wedge^2)=L(s,\wedge^2\rho)=\prod_{1\le i<j\le n}L(s,\chi_i\chi_j).$$
\end{proof}

For characters $\chi_1,\ldots,\chi_n$ of $F^\times$ we settle the following hypothesis:
$$\mbox{(H): for each $i,j\ (1\le i< j\le n)$, if $\chi_i,\chi_j$ are ramified, then 
so is $\chi_i\chi_j$}.$$ 

\begin{prop}\label{prop:H}
Keep the notations in Lemma~\ref{psr}.
Under the hypothesis $(H)$, the following equality holds:
$$\mathscr{L}(s,\pi,\wedge^2)=L(s,\pi,\wedge^2).$$
\end{prop}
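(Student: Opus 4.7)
The plan is to compare the two sides factor-by-factor using the standard description of $L$-factors of characters of $F^\times$. Recall that for a character $\chi$ of $F^\times$ one has $L(s,\chi) = (1-\chi(\p)q^{-s})^{-1}$ when $\chi$ is unramified and $L(s,\chi) = 1$ when $\chi$ is ramified. Applying this to the principal series $\pi = \mathrm{Ind}^{G_n}_{B_n}(\chi_1 \otimes \cdots \otimes \chi_n)$ together with the factorization $L(s,\pi) = \prod_{i=1}^n L(s,\chi_i)$ shows that in the normalization $L(s,\pi) = \prod_{i=1}^n (1-\alpha_i q^{-s})^{-1}$ we may take $\alpha_i = \chi_i(\p)$ if $\chi_i$ is unramified, and $\alpha_i = 0$ if $\chi_i$ is ramified.

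Substituting into the definition of the formal exterior square $L$-function, the first step is to observe
\[
\fL \;=\; \prod_{\substack{1\le i<j\le n\\ \chi_i,\,\chi_j\ \text{unramified}}} (1-\chi_i(\p)\chi_j(\p)q^{-s})^{-1},
\]
since any pair involving at least one ramified character contributes the trivial factor $1$. On the other side, Lemma~\ref{psr} gives $L(s,\pi,\wedge^2) = \prod_{1\le i<j\le n} L(s,\chi_i\chi_j)$, so it suffices to verify the identity $L(s,\chi_i\chi_j) = (1-\alpha_i\alpha_j q^{-s})^{-1}$ for every pair $(i,j)$.

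The verification splits into three cases. When both $\chi_i$ and $\chi_j$ are unramified, $\chi_i\chi_j$ is unramified with $(\chi_i\chi_j)(\p) = \chi_i(\p)\chi_j(\p) = \alpha_i\alpha_j$, so the two sides agree. When exactly one of $\chi_i,\chi_j$ is ramified, the product $\chi_i\chi_j$ is automatically ramified (the product of a ramified and an unramified character is ramified, since unramified characters are trivial on $\ri^\times$), so $L(s,\chi_i\chi_j) = 1$, matching $(1-\alpha_i\alpha_jq^{-s})^{-1} = 1$ on the other side. Finally, when both $\chi_i$ and $\chi_j$ are ramified, hypothesis (H) guarantees that $\chi_i\chi_j$ is also ramified, so again $L(s,\chi_i\chi_j) = 1 = (1-\alpha_i\alpha_j q^{-s})^{-1}$.

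Multiplying these equalities over all pairs $(i,j)$ yields $\fL = L(s,\pi,\wedge^2)$, as required. The only substantive role played by hypothesis (H) is in the last case: without it, $\chi_i\chi_j$ could be unramified while $\alpha_i\alpha_j = 0$, producing an extra factor in $L(s,\pi,\wedge^2)$ that is absent from $\fL$ and destroying the equality. Thus (H) is precisely the minimal hypothesis which prevents this discrepancy, and the proof is essentially bookkeeping once this observation is isolated.
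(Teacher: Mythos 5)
Your proof is correct and is essentially the paper's argument with the details filled in: the paper likewise reduces to Lemma~\ref{psr} and the observation that $\alpha_i=\chi_i(\p)$ or $0$ according as $\chi_i$ is unramified or ramified, leaving the three-case check implicit. Your closing remark correctly isolates the one place where (H) is actually used, namely ruling out an unramified product $\chi_i\chi_j$ of two ramified characters.
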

\begin{proof}By the hypothesis (H), the definition of $\mathscr{L}(s,\pi,\wedge^2)$ and Lemma \ref{psr}, 
we have that 
$$\mathscr{L}(s,\pi,\wedge^2)=\prod_{1\le i<j\le n}L(s,\chi_i\chi_j)=L(s,\pi,\wedge^2).$$ 
\end{proof}

\begin{rem}
Let $\pi={\rm Ind}^{G_n}_{B_n}(\chi_1\otimes\cdots\otimes\chi_n)$ be a unitary
principal series representation.
Suppose that the characters 
$\chi_1,\ldots,\chi_n$ satisfy the hypothesis (H).
Then Theorems~\ref{thm:main_even}, \ref{thm:main_odd} and Proposition~\ref{prop:H}
says that
Jacquet-Shalika integral of a newform for $\pi$ attains
the exterior square $L$-function $L(s,\pi,\wedge^2)$.
A similar result holds for Bump-Friedberg integral 
because of Theorem~\ref{thm:B-F}.
\end{rem}

\providecommand{\bysame}{\leavevmode\hbox to3em{\hrulefill}\thinspace}
\providecommand{\MR}{\relax\ifhmode\unskip\space\fi MR }
% \MRhref is called by the amsart/book/proc definition of \MR.
\providecommand{\MRhref}[2]{%
  \href{http://www.ams.org/mathscinet-getitem?mr=#1}{#2}
}
\providecommand{\href}[2]{#2}


\begin{thebibliography}{10}

\bibitem{Belt}
D.~Belt, \emph{On the holomorphy of exterior-square {$L$}-functions, preprint,
  ar{X}iv:1108.2200}, 2011.

\bibitem{B-F}
D.~Bump and S.~Friedberg, \emph{The exterior square automorphic {$L$}-functions
  on {${\rm GL}(n)$}}, Festschrift in honor of {I}. {I}. {P}iatetski-{S}hapiro
  on the occasion of his sixtieth birthday, {P}art {II} ({R}amat {A}viv, 1989),
  Israel Math. Conf. Proc., vol.~3, Weizmann, Jerusalem, 1990, pp.~47--65.
  \MR{1159108 (93d:11050)}

\bibitem{CS}
W.~Casselman and J.~Shalika, \emph{The unramified principal series of
  {$p$}-adic groups. {II}. {T}he {W}hittaker function}, Compositio Math.
  \textbf{41} (1980), no.~2, 207--231. \MR{581582 (83i:22027)}

\bibitem{GJ}
R.~Godement and H.~Jacquet, \emph{Zeta functions of simple algebras}, Lecture
  Notes in Mathematics, Vol. 260, Springer-Verlag, Berlin, 1972. \MR{0342495
  (49 \#7241)}

\bibitem{h&t}
M.~Harris and R.~Taylor, \emph{The geometry and cohomology of some simple
  {S}himura varieties}, Annals of Mathematics Studies, vol. 151, Princeton
  University Press, Princeton, NJ, 2001, With an appendix by Vladimir G.
  Berkovich. \MR{1876802 (2002m:11050)}

\bibitem{hen2}
G.~Henniart, \emph{Caract\'erisation de la correspondance de {L}anglands locale
  par les facteurs {$\epsilon$} de paires}, Invent. Math. \textbf{113} (1993),
  no.~2, 339--350. \MR{1228128 (96e:11078)}

\bibitem{hen1}
\bysame, \emph{Une preuve simple des conjectures de {L}anglands pour {${\rm
  GL}(n)$} sur un corps {$p$}-adique}, Invent. Math. \textbf{139} (2000),
  no.~2, 439--455. \MR{1738446 (2001e:11052)}

\bibitem{hen3}
\bysame, \emph{Une caract\'erisation de la correspondance de {L}anglands locale
  pour {${\rm GL}(n)$}}, Bull. Soc. Math. France \textbf{130} (2002), no.~4,
  587--602. \MR{1947454 (2004d:22013)}

\bibitem{Jacquet2}
H.~Jacquet, \emph{Principal {$L$}-functions of the linear group}, Automorphic
  forms, representations and {$L$}-functions ({P}roc. {S}ympos. {P}ure {M}ath.,
  {O}regon {S}tate {U}niv., {C}orvallis, {O}re., 1977), {P}art 2, Proc. Sympos.
  Pure Math., XXXIII, Amer. Math. Soc., Providence, R.I., 1979, pp.~63--86.
  \MR{546609 (81f:22029)}

\bibitem{JPSS}
H.~Jacquet, I.~Piatetski-Shapiro, and J.~Shalika, \emph{Conducteur des
  repr\'esentations du groupe lin\'eaire}, Math. Ann. \textbf{256} (1981),
  no.~2, 199--214. \MR{620708 (83c:22025)}

\bibitem{J-S}
H.~Jacquet and J.~Shalika, \emph{Exterior square {$L$}-functions}, Automorphic
  forms, {S}himura varieties, and {$L$}-functions, {V}ol.\ {II} ({A}nn {A}rbor,
  {MI}, 1988), Perspect. Math., vol.~11, Academic Press, Boston, MA, 1990,
  pp.~143--226. \MR{1044830 (91g:11050)}

\bibitem{Kewat}
P.~K. Kewat, \emph{The local exterior square {$L$} -function: Holomorphy,
  non-vanishing and shalika functionals}, J. Algebra \textbf{347} (2011),
  153--172.

\bibitem{K-R}
P.~K. Kewat and R.~Raghunathan, \emph{On the local and global exterior square
  {$L$}-functions of {$\mathrm{GL}_n$}, preprint, ar{X}iv:1201.4126}, 2012.

\bibitem{kudla}
S.~S. Kudla, \emph{The local {L}anglands correspondence: the non-{A}rchimedean
  case}, Motives ({S}eattle, {WA}, 1991), Proc. Sympos. Pure Math., vol.~55,
  Amer. Math. Soc., Providence, RI, 1994, pp.~365--391. \MR{1265559
  (95d:11065)}

\bibitem{Mac}
I.~G. Macdonald, \emph{Symmetric functions and {H}all polynomials}, second ed.,
  Oxford Mathematical Monographs, The Clarendon Press Oxford University Press,
  New York, 1995, With contributions by A. Zelevinsky, Oxford Science
  Publications. \MR{1354144 (96h:05207)}

\bibitem{Matringe}
N.~Matringe, \emph{Essential {W}hittaker functions for {$\mathrm{GL}(n)$} over
  a {$p$}-adic field, preprint, ar{X}iv:1201.5506}, 2012.

\bibitem{M5}
M.~Miyauchi, \emph{Whittaker functions associated to newforms for
  {$\mathrm{GL}(n)$} over {$p$}-adic fields, preprint, ar{X}iv:1201.3507},
  2012.

\bibitem{Shintani}
T.~Shintani, \emph{On an explicit formula for class-{$1$} ``{W}hittaker
  functions'' on {$GL_{n}$} over {$\mathfrak{P}$}-adic fields}, Proc. Japan
  Acad. \textbf{52} (1976), no.~4, 180--182. \MR{0407208 (53 \#10991)}

\end{thebibliography}
\end{document}